\newtheorem{thm}{Theorem}[section]
\newtheorem{prop}[thm]{Proposition}
\newtheorem{lem}[thm]{Lemma}
\newtheorem{cor}[thm]{Corollary}
\newtheorem{qst}[thm]{Question}
\theoremstyle{definition}
\theoremstyle{remark}
\newtheorem{rmk}[thm]{Remark}
\newtheorem{eg}[thm]{Example}
\newcommand{\Z}{\mathbb Z}
\newcommand{\C}{\mathbb C}
\renewcommand{\hat}{\protect\widehat}
\newcommand{\rank}{\mathop{\mathrm{rank}}\nolimits}
\newcommand{\cx}{\mathop{\mathrm{cx}}\nolimits}
\newcommand{\codim}{\mathop{\mathrm{codim}}\nolimits}
\newcommand{\depth}{\mathop{\mathrm{depth}}\nolimits}
\newcommand{\K}{\mathbb{K}}
\newcommand{\supp}{\mathop{\mathrm{Supp}}\nolimits}
\newcommand{\HF}{\mathop{\mathrm{HF}}\nolimits}
\newcommand{\HS}{\mathop{\mathrm{HS}}\nolimits}
\newcommand{\tensor}{\otimes}
\newcommand{\tor}{\mathop{\mathrm{Tor}}\nolimits}
\title[Koszul Quotients of Exterior Algebras]{G-Quadratic, LG-Quadratic, and Koszul Quotients of Exterior Algebras}
\author[J. McCullough]{Jason McCullough}
\address{Iowa State University, Department of Mathematics, Ames, IA, USA}
\email{jmccullo@iastate.edu}
\author[Z. Mere]{Zachary Mere}
\address{Iowa State University, Department of Mathematics, Ames, IA, USA}
\email{zwmere@iastate.edu}
\begin{document}

\subjclass[2010]{Primary: 16S37, 13P10; Secondary: 13C40, 11E04, 05E40}

\keywords{Koszul algebra, exterior algebra, G-quadratic, LG-quadratic, edge ideal, generic quadrics}

\begin{abstract}
This paper introduces the study of LG-quadratic quotients of exterior algebras, showing that they are Koszul, as in the commutative case.  We construct an example of an LG-quadratic algebra that is not G-quadratic and another example that is Koszul but not LG-quadratic.  This is only the second known Koszul algebra that is not LG-quadratic and the first that is noncommutative.
\end{abstract}

\maketitle

\section{Introduction}

Let $\K$ be a field and let $E = \bigwedge_\K \langle e_1,\ldots,e_n \rangle$ denote an exterior algebra over $\K$ on $n$ variables.  The purpose of this paper is to investigate the Koszul and G-quadratic properties of quotients of $E$.  In the commutative setting, we have the following implications:
\[\text{quadratic GB} \Rightarrow \text{G-quadratic} \Rightarrow \text{LG-quadratic} \Rightarrow \text{Koszul} \Rightarrow \text{quadratic}.\]
Each of these implications is strict; see \cite[p. 292]{CDR13}.  The third implication is particularly interesting as there is only one known commutative Koszul algebra that is not LG-quadratic due to Conca \cite[Example 1.20]{Conca}.  

Over an exterior algebra, we show that the same implications hold and that all of them are strict.  In particular, we introduce the notion of LG-quadratic quotients of an exterior algebra and prove that they are Koszul (Theorem~\ref{LG}).  We construct an LG-quadratic quotient of an exterior algebra that is not G-quadratic (Theorem~\ref{LGnotG}), thus answering a question of Thieu \cite[Example 5.2.2]{Thieu13}.  We also construct a Koszul quotient of an exterior algebra that is not LG-quadratic (Theorem~\ref{KoszulNonLG}).  This is then only the second such algebra and the first that is noncommutative.  It is also non-obstructed, in the language of Conca \cite[Example 1.21]{Conca}, meaning that there are algebras defined by quadratic monomials with the same Hilbert function.  This is one reason for the difficulty in proving that our construction is indeed not LG-quadratic.

One of the primary motivations for this paper comes from the theory of Orlik-Solomon algebras of complex hyperplane arrangements, which are quotients of exterior algebras.  It follows from work of Peeva \cite{Peeva03} and Bj\"orner and Zieglier \cite{BZ91} that a hyperplane arrangement is supersolvable if and only if its Orlik-Solomon algebra has a defining ideal with a quadratic Gr\"obner basis (in a particular system of coordinates).  It is an open question whether there are Koszul Orlik-Solomon algebras arising from non-supersolvable arrangements; see \cite[Problem 82]{Schenck12} and \cite[p. 487]{SY97}.  The two notions are equivalent for hypersolvable arrangements \cite{JP98},  graphic arrangements \cite{SS02}, Dirichlet arrangements \cite{Lutz19}, root ideal arrangements \cite{Hultman16}, and others \cite{VR13}.

The rest of this paper is organized as follows.  Section~\ref{back} collects the necessary notation and background results. In Section~\ref{lg} we show that LG-quadratic quotients of exterior algebras are Koszul.  In Section~\ref{depth} we construct an LG-quadratic algebra (which is necessarily Koszul) that is not G-quadratic.  We first consider the depth of edge ideals associated to graphs and construct our example as a quotient of an algebra defined by quadratic monomials.  In Section~\ref{nonLG} we show that six generic quadrics in six exterior variables define a Koszul algebra that is not LG-quadratic.  A final Section~\ref{examples} collects some remaining relevant examples.

\section{Background}\label{back}

In this section we fix notation and recall some results from the literature.  
Throughout this paper, let $\K$ denote a field and let $V$ be a $\K$-vector space of dimension $n$.  Let $e_1,\ldots,e_n$ denote a fixed basis of $V$ and let $E = \bigwedge (V) = \bigwedge_\K\langle e_1,\ldots, e_n \rangle$ denote the exterior algebra of $V$.  
We view $E = \oplus_{i = 0}^n E_i$ as a skew-commutative, graded ring with $\deg(e_i) = 1$ for $1 \le i \le n$ and with $E_i$ denoting the $\K$-span of the degree $i$ monomials in $E$.  An $E$-module is graded if there is a $\K$-vector space direct sum decomposition $M = \oplus_i M_i$ such that $E_i \cdot M_j \subseteq M_{i+j}$ for all integers $i$ and $j$.  A \textit{monomial} in $E$ is an expression of the form $e_{i_1} \wedge e_{i_2} \wedge  \cdots \wedge e_{i_t}$; we usually omit the wedge products and write $e_{i_1}e_{i_2}\cdots e_{i_t}$ instead.  An ideal generated by monomials is a monomial ideal.  


\subsection{Free resolutions, Hilbert series, Poincare series}
Let $R$ be a $\Z$-graded $\K$-algebra, and
let $M$ be a finitely generated, graded $R$-module.  The Hilbert function of $M$ is  $\HF(M,i) = \dim_\K(M_i)$.  Its generating series is the Hilbert series $\HS_M(t) = \sum_{i} HF(M,i) t^i$.  We recall that $\HS_E(t) = (1+t)^n$.

The module $M$ has a minimal graded free resolution $\mathbf{F}_\bullet$ over $R$, where $F_i = \bigoplus_j R(-j)^{\beta_{i,j}}$.  Here $R(-j)$ denotes a rank-one graded free $R$-module with $R(-j)_i = R_{i-j}$.  The numbers $\beta^R_{i,j}(M)$ are the graded Betti numbers of $M$ and are invariants of $M$.  The $i$th total Betti number of $M$ is $\beta^R_i(M) = \sum_j \beta^R_{i,j}(M)$.  We may alternatively compute the Betti numbers of $M$ as $\beta^R_{i,j}(M) = \dim_\K \tor_i^R(M,\K)_j$ and $\beta^R_i(M) = \dim_\K \tor_i^R(M,\K)$.  Note that most resolutions over $E$ are infinite.  The generating series of the total Betti numbers is the Poincare series of $M$ denoted $P^R_M(t) = \sum_{i \ge 0} \beta^R_i(M) t^i$.  

\subsection{Depth, regular elements, and singular varieties}
Let $M$ be a graded $E$-module. An element $\ell \in E_1$ is \textit{regular} on $M$ if \[\{m\in M\mid \ell m=0\}=\ell M.\] Equivalently, $\ell \in E_1$ is regular if \[ M(-1)\xrightarrow{\ell \cdot} M\xrightarrow{\ell \cdot} M(1)\] is an exact sequence of graded $E$-modules.

If $\ell \in E_1$ is not $M$-regular, we say that $\ell$ is $M$-singular. The set of $M$-singular elements in $E_1$, denoted $V_E(M)$, is called the \textit{singular variety} of $M$.  The following theorem of Aramova, Avramov, and Herzog will be useful when attempting to compute singular varieties (termed rank varieties in \cite{AAH00}.)

\begin{thm}[{\cite[Theorem 3.1]{AAH00}}]\label{thmAAH} If $\K$ is a field, then the singular varieties of finite graded $E$-modules $M$ and $N$ satisfy the following properties:
\begin{enumerate}
    \item $V_E(M)$ is  a union of linear subspaces in $E_1$.
    \item If $M\subseteq N$, then each  of $V_E(M), V_E(N), V_E(N/M)$ is contained in the union of the other two.
\end{enumerate}
\end{thm}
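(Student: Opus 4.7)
The plan is to reformulate regularity cohomologically: because $\ell^2 = 0$ in $E$, for every $\ell \in E_1$ the two-sided sequence
$$\cdots \xto{\ell\cdot} M(-1) \xto{\ell\cdot} M \xto{\ell\cdot} M(1) \xto{\ell\cdot} \cdots$$
is a complex of graded $E$-modules, whose homology $H(M,\ell) := \ker(\ell\cdot)/\im(\ell\cdot)$ vanishes precisely when $\ell$ is $M$-regular. Thus $V_E(M) = \{\ell \in E_1 : H(M,\ell) \neq 0\}$, which is the object we analyze.

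For (2), a short exact sequence $0 \to M \to N \to N/M \to 0$ yields a short exact sequence of the associated complexes. Because this complex is built from the same differential $\ell\cdot$ in every position, the resulting long exact homology sequence collapses into a hexagonal exact sequence
$$H(M,\ell) \to H(N,\ell) \to H(N/M,\ell) \to H(M,\ell) \to H(N,\ell) \to H(N/M,\ell) \to H(M,\ell).$$
If $\ell$ is regular on two of $M$, $N$, $N/M$, then two of the six vertices vanish, and a short diagram chase using exactness at the third vertex forces it to vanish as well. This is precisely the containment asserted in (2).

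For (1), conicality follows from the isomorphism $H(M, c\ell) \cong H(M,\ell)$ for $c \in \K^\times$. Zariski-closedness follows degreewise: in each degree $j$, the map $\ell\cdot : M_j \to M_{j+1}$ has matrix entries linear in the coordinates of $\ell$, so the condition $H(M,\ell)_j \neq 0$ is determinantal and hence defined by polynomial equations in the coordinates of $\ell$. The main obstacle---and the technical heart of Aramova--Avramov--Herzog's original result---is upgrading ``Zariski-closed conical'' to ``union of linear subspaces.'' The strategy I would pursue is via the BGG correspondence, identifying graded $E$-modules with complexes of graded free modules over $S = \K[x_1,\ldots,x_n]$ so that $V_E(M)$ matches the support of an associated $S$-module. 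Using that $E$ is Koszul dual to a complete intersection, one then invokes a Dade's-lemma / Avrunin--Scott style argument: support varieties are detected by cyclic shifted subgroups (i.e.\ by single linear forms), which forces the desired decomposition of $V_E(M)$ into linear subspaces.
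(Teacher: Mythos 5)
First, a point of comparison: the paper does not prove this statement at all --- it is quoted from \cite[Theorem 3.1]{AAH00} and used as a black box --- so the only question is whether your blind argument actually establishes it.

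Your part (2) is correct and complete. Since $\ell^2=0$ for every $\ell\in E_1$, regularity is exactly the vanishing of $H(M,\ell)=\ker(\ell\cdot)/\im(\ell\cdot)$; the short exact sequence $0\to M\to N\to N/M\to 0$ gives a short exact sequence of the two-periodic complexes, and in the resulting (periodic) long exact homology sequence the vanishing of any two of $H(M,\ell)$, $H(N,\ell)$, $H(N/M,\ell)$ forces the vanishing of the third by exactness at that vertex. That is precisely assertion (2), and it is the standard argument.

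Part (1), however, has a genuine gap. What you prove is that $V_E(M)$ is a conical Zariski-closed subset of $E_1$ (via lower semicontinuity of the ranks of $\ell\cdot\colon M_{j-1}\to M_j$ and $\ell\cdot\colon M_j\to M_{j+1}$); but the content of (1) is that this closed cone is a \emph{finite union of linear subspaces}, and you explicitly defer that to a sketched strategy which, as described, does not deliver it. A Dade's-lemma / Avrunin--Scott style argument identifies the rank variety with the cohomological support variety, i.e.\ with the zero set of the annihilator of $\ext_E(M,\K)$ over the Koszul dual polynomial ring --- but such supports are, a priori, arbitrary closed cones, and for \emph{ungraded} modules over algebras of this type rank varieties really can be arbitrary cones by realization theorems. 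So linearity is not a formal consequence of detection by rank-one elements; it is a special feature of $\Z$-graded $E$-modules and requires a separate argument exploiting the internal grading. This is not a cosmetic omission: the linearity (in particular, the decomposition into finitely many irreducible \emph{linear} components, no one containing another) is exactly what the paper leans on in the proof of Proposition~\ref{pathprop} to upgrade the containments of part (2) to an equality of varieties.
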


A sequence of elements $\ell_1,\ldots,\ell_d$ is a regular sequence on $M$ if $\ell_i$ is regular on $M/(\ell_1,\ldots,\ell_{i-1})M$ for $i = 1,\ldots,d$.  The maximal length of a regular sequence on $M$ is the depth of $M$ denoted $\mathrm{depth}_E(M).$  Note that $\mathrm{depth}_E(M) > 0$ if and only if $V_E(M) \neq E_1$.  Moreover, $HS_M(t)$ is divisible by $(1+t)^{\depth_E(M)}$.

The complexity of an $E$-module $M$ is
\[\cx_E(M) = \inf\{ c \in \Z\,|\,\beta_i^E(M) \le \alpha i^{c-1} \text{ for some } \alpha \in \mathbb{R}\}.\]
There is the following analogue of the Auslander-Buchsbaum theorem connecting depth and complexity.
\begin{thm}[{\cite[Theorem 3.2]{AAH00}}]\label{ab} If $\K$ is an infinite field and $M$ is a finitely generated $E$-module, then every maximal regular sequence has $\depth_E(M)$ elements and
\[\depth_E(M) + \cx_E(M) = n.\]
\end{thm}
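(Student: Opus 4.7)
The plan is to prove the identity $\depth_E(M) + \cx_E(M) = n$ by induction on $n$; the invariance of the length of a maximal regular sequence then follows immediately, since any such sequence on $M$ must have length exactly $n - \cx_E(M)$, which depends only on $M$. The case $n = 0$ is trivial, so assume $n \ge 1$.

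For the inductive step when $\depth_E(M) \ge 1$, use the infinite field hypothesis together with $V_E(M) \ne E_1$ to fix an $M$-regular linear form $\ell$. Extending $\ell$ to a basis of $V$ identifies $E' := E/(\ell)$ with the exterior algebra on $n - 1$ variables, and $M/\ell M$ becomes an $E'$-module. Two comparisons close the case. First, $\depth_E(M) = \depth_{E'}(M/\ell M) + 1$, since any $M/\ell M$-regular sequence in $E'_1$ lifts to a sequence in $E_1$ that, together with $\ell$, is $M$-regular, and conversely any $M$-regular sequence extending $\ell$ descends to an $M/\ell M$-regular sequence over $E'$. Second, $\cx_E(M) = \cx_{E'}(M/\ell M)$: regularity of $\ell$ combined with $\ell^2 = 0$ makes the complex $\cdots \to M(-2) \xrightarrow{\ell\cdot} M(-1) \xrightarrow{\ell\cdot} M \to 0$ exact in positive degrees, so $\tor^E_i(E', M) = 0$ for $i > 0$. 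Tensoring the minimal graded free resolution of $M$ over $E$ with $E'$ therefore produces a free resolution of $M/\ell M$ over $E'$ that remains minimal (its differentials still lie in the irrelevant ideal of $E'$), so $\beta^{E'}_i(M/\ell M) = \beta^E_i(M)$ for every $i$. Applying the inductive hypothesis to $M/\ell M$ over $E'$ yields $\depth_{E'}(M/\ell M) + \cx_{E'}(M/\ell M) = n - 1$, and combining with the two identities gives $\depth_E(M) + \cx_E(M) = n$.

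For the base of the recursion, suppose $\depth_E(M) = 0$, equivalently $V_E(M) = E_1$. The upper bound $\cx_E(M) \le n$ follows because a composition series of the finite-length $E$-module $M$ gives $\beta^E_i(M) \le (\dim_\K M)\,\beta^E_i(\K) = (\dim_\K M)\binom{n-1+i}{n-1}$ via the long exact sequence in Tor and induction on length, and the right-hand side is polynomial in $i$ of degree $n-1$. The matching lower bound $\cx_E(M) \ge n$ is the substantive content of the theorem and I expect it to be the main obstacle: it amounts to the equality $\cx_E(M) = \dim V_E(M)$ of AAH00, whose proof uses the linear structure of $V_E(M)$ provided by Theorem~\ref{thmAAH} to construct enough nonzero classes in $\tor^E_\bullet(M, \K)$ to witness polynomial Betti growth of degree at least $n - 1$. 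Once this lower bound is in hand, the base case is complete, and the induction establishes the theorem.
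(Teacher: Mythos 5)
The paper does not prove this statement; it is quoted verbatim from \cite[Theorem 3.2]{AAH00}, so there is no internal proof to compare against. Judged on its own terms, your argument has a genuine gap at exactly the point you flag yourself. The reduction steps are fine: $\tor_i^E(E/(\ell),M)=0$ for $i>0$ when $\ell$ is $M$-regular, hence $\beta_i^{E'}(M/\ell M)=\beta_i^E(M)$ and $\cx_E(M)=\cx_{E'}(M/\ell M)$; the upper bound $\cx_E(M)\le n$ via a composition series is also correct. But the entire content of the theorem is concentrated in the step you defer: that $\depth_E(M)=0$ (i.e.\ $V_E(M)=E_1$) forces $\cx_E(M)\ge n$. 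Writing that this ``amounts to the equality $\cx_E(M)=\dim V_E(M)$'' and gesturing at constructing nonzero Tor classes is a restatement of the theorem, not a proof of it; every pass through your induction terminates at this unproved base case, so nothing is actually established. In \cite{AAH00} this is where the real work lies (comparing $\tor^E_\bullet(M,\K)$ with cohomology over the subalgebras $\K\langle \ell\rangle$ for $\ell\in V_E(M)$, in the spirit of rank/support varieties), and it cannot be omitted.

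A secondary issue: your identity $\depth_E(M)=\depth_{E'}(M/\ell M)+1$ is only clear in the direction $\ge$. The direction $\le$ presupposes that some maximal regular sequence of length $\depth_E(M)$ can be chosen to begin with your particular $\ell$, which is essentially the first assertion of the theorem --- the one you claim ``follows immediately.'' This circularity can be repaired (run the induction to get $\depth_E(M)\ge n-\cx_E(M)$ from the $\ge$ direction alone, then apply the depth-zero case to the quotient by an arbitrary maximal regular sequence to get the reverse inequality), but as written the logic is not in order, and the repair again leans on the missing lower bound.
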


\subsection{Gr\"obner bases}\label{gb}

Fix a monomial order $<$ on $E$ and a graded ideal $I \subseteq E$.  Denote the leading term of $f \in E$ by $LT(f)$ and the set of monomials of $f$ with nonzero coefficients by $\supp(f)$, called the support of $f$.  The initial ideal of $I$ is $in_<(I) = (LT(f):f \in I)$.
Then $I$ has a unique reduced Gr\"obner basis $g_1,\ldots,g_t \in I$; in other words, $(LT(g_1),\ldots,LT(g_t)) = in_<(I)$, each leading coefficient is $1$, and no monomial in $\supp(g_i)$ is divisible by $LT(g_j)$ for any $1 \le j \le t$.  There is a Buchberger algorithm for computing Gr\"obner bases and we refer the reader to \cite{HH11} for more details.  In particular, we use that $HS_{E/I}(t) = HS_{E/in_<(I)}(t)$ \cite[Corollary 1.2]{AHH97}.  

Just as with initial ideals over a polynomial ring, the graded Betti numbers of a graded ideal over an exterior algebra can only go up when one passes to the initial ideal.  One can then prove the analogous result for depth.

\begin{prop} Let $E$ be an exterior algebra and let $I$ be a graded ideal in $E$.  Then $\depth_E(E/in_<(I))\leq \depth_E(E/I)$. \end{prop}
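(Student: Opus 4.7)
The plan is to reduce the depth inequality to a complexity inequality via the Auslander--Buchsbaum-style formula (Theorem~\ref{ab}), and then deduce that complexity inequality from the companion statement on Betti numbers that the sentence just above the proposition has asserted. Concretely, if $\beta_i^E(E/I) \le \beta_i^E(E/in_<(I))$ for every $i$, then the definition of $\cx_E$ as an infimum of polynomial growth rates forces
\[
\cx_E(E/I) \;\le\; \cx_E(E/in_<(I)),
\]
and Theorem~\ref{ab} converts this directly into the desired inequality of depths.

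In more detail, I would proceed in three steps. First, reduce to the case that $\K$ is infinite, which is the standing hypothesis of Theorem~\ref{ab}. Passing to an infinite field extension $\K' \supseteq \K$ is a flat extension that is compatible with the monomial order and with the formation of initial ideals, and it leaves the total Betti numbers $\beta_i^E(M) = \dim_\K \tor_i^E(M,\K)$ unchanged; combined with Theorem~\ref{ab} applied over $\K'$ this also shows that $\depth_E(E/I)$ and $\depth_E(E/in_<(I))$ are each unaffected by the extension, so one reduces cleanly to the infinite-field case. Second, invoke the inequality $\beta_i^E(E/I) \le \beta_i^E(E/in_<(I))$ flagged just above the statement. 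Third, feed this into the definition of complexity and then into Theorem~\ref{ab} to conclude
\[
\depth_E(E/in_<(I)) \;=\; n - \cx_E(E/in_<(I)) \;\le\; n - \cx_E(E/I) \;=\; \depth_E(E/I).
\]

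The main obstacle is really the Betti number comparison itself, which the excerpt asserts but does not prove. The standard route is Gr\"obner degeneration: pick a weight vector $w$ representing $<$, form the flat family $I_t \subseteq E[t]$ whose special fiber is $in_<(I)$ and whose general fiber is isomorphic to $I$, and invoke upper semicontinuity of Betti numbers in a flat family. Recording this argument carefully in the skew-commutative setting, where one has to be attentive to signs when forming the Rees-type deformation, is the real work hidden behind this proposition. A smaller secondary concern is the reduction to an infinite field: regular sequences do not literally descend along a field extension, so the cleanest justification of the base-change step for $\depth$ is precisely the identity $\depth = n - \cx$ that powers the main argument, applied once over $\K'$ and then transferred back via the base-change invariance of complexity.
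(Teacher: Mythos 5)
Your proposal follows exactly the paper's route: the Betti number comparison $\beta_{ij}(E/I)\leq \beta_{ij}(E/in_<(I))$ (which the paper simply cites as \cite[Proposition 1.8]{AHH97} rather than reproving via Gr\"obner degeneration), then the resulting inequality of complexities from the definition of $\cx_E$, then Theorem~\ref{ab} to convert complexities into depths. The only difference is that you are more careful than the paper about the infinite-field hypothesis in Theorem~\ref{ab}, which the paper's proof passes over in silence.
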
  

\begin{proof} By \cite[Proposition 1.8]{AHH97}, $\beta_{ij}(E/I)\leq \beta_{ij}(E/in_<(I))$ for all $i,j$. Hence \begin{align*} \cx_E(E/I)&=\inf \{c\in \mathbb Z\mid \beta_i^E(E/I)\leq \alpha i^{c-1} \text{ for some } \alpha \in \mathbb R\}\\&\leq \inf \{c\in \mathbb Z\mid \beta_i^E(E/in_<(I))\leq \alpha i^{c-1} \text{ for some } \alpha \in \mathbb R\}\\&=\cx_E(E/in_<(I)). \end{align*} The claim then follows from Theorem~\ref{ab}.
\end{proof}

\subsection{Koszul algebras and regularity}

A positively graded $\K$-algebra $R$ is said to be Koszul if its residue field $\K$ has a linear free resolution as an $R$-module. Equivalently, $R$ is Koszul if $\beta^R_{ij}(\K)=0$ for $i\neq j$.  For more details about Koszul algebras, see \cite{Froberg99} and \cite{PP05}.

If $R = E/I$ is a quotient of an exterior algebra $E$ by some graded ideal $I$, $R$ is said to be G-quadratic if $I$ has a Gr\"obner basis consisting of quadrics (homogeneous elements of degree 2) with respect to some coordinates on $E_1$ and some monomial ordering on $E$.  A result of Fr\"oberg \cite{Froberg75} shows that if $I$ is a monomial ideal, then $R$ is Koszul.  It follows from a standard deformation argument \cite{Froberg99} that if $R$ is G-quadratic, then $R$ is again Koszul.

Lastly, $R$ is LG-quadratic if there exists a G-quadratic algebra $A$ and a regular sequence of linear forms $\ell_1,\dots,\ell_r$ such that $R\cong A/(\ell_1,\dots,\ell_r)$.  LG-quadratic algebras were studied in the commutative setting by Caviglia \cite{Caviglia04}, Avramov, Conca, and Iyengar \cite{ACI10} and others.  
We prove in the following section that LG-quadratic quotients of exterior algebras are Koszul; it is immediate from the definition that all G-quadratic algebras are LG-quadratic.  That the  G-quadratic, LG-quadratic, and Koszul quotients of exterior algebras are distinct classes constitutes the bulk of this paper.  For related work on Koszul algebras and exterior algebras, see \cite{Thieu13}, \cite{Thieu20}, \cite{KK08}, and \cite{KK12}.

\section{LG-quadratic algebras}\label{lg}

In this section we show that the Koszul property is preserved when killing a regular element.  As a consequence, we show that LG-quadratic quotients of exterior algebras are Koszul.

We first recall the following change of rings spectral sequence.

\begin{thm}[{\cite[Theorem 10.73]{Rotman09}}] Let $R\rightarrow S$ be a graded map of graded $\K$-algebras, let $M$ be a graded $R$-module, and let $N$ be a graded $S$-module. Then there exists a spectral sequence with second page \[(E_2)_{p,q}=\tor^S_p(\tor^R_q(M,S),N)\] that converges to (as the total homology of a certain double complex) \[\tor^R_{p+q}(M,N).\] \end{thm}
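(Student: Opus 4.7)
My plan is to build a first-quadrant bicomplex whose total homology is $\tor^R(M,N)$ and to read off the desired $E_2$-page from one of its two spectral sequences. Choose a graded projective resolution $P_\bullet \to M$ over $R$ and a graded projective resolution $F_\bullet \to N$ over $S$, and view each $F_p$ as a graded $R$-module by restriction of scalars along $R \to S$. Form the double complex
\[D_{p,q} = P_q \otimes_R F_p\]
with horizontal differential induced by $F_\bullet$, vertical differential induced by $P_\bullet$, and the usual sign conventions. Because both indices are nonnegative, the total complex has two strongly convergent spectral sequences.

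Running the horizontal filtration first: since $P_q$ is a free $R$-module the functor $P_q \otimes_R -$ is exact, and since $F_\bullet \to N$ remains acyclic after restriction of scalars, the row $P_q \otimes_R F_\bullet$ has homology $P_q \otimes_R N$ concentrated in horizontal degree zero. Taking the remaining vertical homology then yields $\tor^R_q(M, N)$, so this spectral sequence collapses on $E_2$ and identifies the total homology with $\tor^R_{p+q}(M, N)$.

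Running the vertical filtration first: fixing $p$ and taking vertical homology produces $H_q(P_\bullet \otimes_R F_p) = \tor^R_q(M, F_p)$. Because $F_p$ is projective over $S$ it is a direct summand of a free $S$-module, so compatibility of $\tor$ with direct summands in the second slot gives a natural isomorphism $\tor^R_q(M, F_p) \iso \tor^R_q(M, S) \otimes_S F_p$. The horizontal differentials are those induced by $F_\bullet$, so the horizontal homology is that of the complex $\tor^R_q(M, S) \otimes_S F_\bullet$; since $F_\bullet \to N$ is a projective $S$-resolution, this is by definition $\tor^S_p\bigl(\tor^R_q(M, S), N\bigr)$, which is the claimed $E_2$-page.

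The main point to verify carefully is the natural isomorphism $\tor^R_q(M, F_p) \iso \tor^R_q(M, S) \otimes_S F_p$: it rests on the projectivity of $F_p$ over $S$ and the compatibility of $\tor$ with the split short exact sequences witnessing it. Once that is in hand, everything else is routine bookkeeping with first-quadrant double complexes of graded modules, and the two spectral sequences fit together to give the asserted convergence $\tor^S_p(\tor^R_q(M,S), N) \Rightarrow \tor^R_{p+q}(M,N)$.
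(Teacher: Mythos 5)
The paper does not prove this statement; it is quoted directly from Rotman \cite[Theorem 10.73]{Rotman09}, so there is no internal proof to compare against. Your double-complex argument $D_{p,q}=P_q\otimes_R F_p$, with the first filtration collapsing to $\tor^R_{p+q}(M,N)$ and the second filtration giving $\tor^S_p(\tor^R_q(M,S),N)$ via the exactness of $-\otimes_S F_p$ for projective $F_p$, is exactly the standard textbook proof of this base-change spectral sequence and is correct, including the one point you flag (the natural isomorphism $\tor^R_q(M,F_p)\iso\tor^R_q(M,S)\otimes_S F_p$, which follows from naturality and compatibility with direct summands, or more directly from writing $P_\bullet\otimes_R F_p=(P_\bullet\otimes_R S)\otimes_S F_p$).
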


We show that, just as in the commutative case, LG-quadratic algebras are Koszul.

\begin{thm}\label{LG} Let $E$ be an exterior algebra over $\K$, let $I\subset E$ be an ideal, and let $R=E/I$. If $\ell\in R_1$ is regular, then $R$ is Koszul if and only if $\overline{R}=R/(\ell)$ is Koszul.  In particular, if $R$ is LG-quadratic, then $R$ is Koszul.\end{thm}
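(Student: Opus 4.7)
The plan is to apply the change of rings spectral sequence above to the map $R \to \bar R$, with $M = N = \K$. The key input is a minimal $R$-free resolution of $\bar R$: regularity of $\ell$ (combined with $\ell^2 = 0$) makes the infinite complex
\[ \cdots \xrightarrow{\ell} R(-2) \xrightarrow{\ell} R(-1) \xrightarrow{\ell} R \to \bar R \to 0 \]
exact, and by minimality $\beta^R_{i,j}(\bar R) = \delta_{i,j}$. Since $\ell$ acts as zero on $\K$, tensoring with $\K$ yields $\tor^R_q(\K, \bar R) \cong \K(-q)$. Feeding this into the spectral sequence produces
\[ (E_2)_{p,q} = \tor^{\bar R}_p(\K, \K)(-q) \Longrightarrow \tor^R_{p+q}(\K, \K), \]
whose graded degree-$i$ strand reads $(E_2)_{p,q,i} = \beta^{\bar R}_{p, i-q}(\K)$, converging to $\beta^R_{p+q, i}(\K)$.

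The implication that $\bar R$ Koszul forces $R$ Koszul is then immediate: Koszulness of $\bar R$ means $(E_2)_{p,q,i}$ vanishes unless $i - q = p$, so every nonzero $E_2$ entry lies on the diagonal $i = p + q$; this property passes through every page to $E_\infty$ and forces $\beta^R_{n, i}(\K) = 0$ for $n \ne i$.

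The reverse implication, that $R$ Koszul forces $\bar R$ Koszul, is the main obstacle, and my plan is to handle it by a minimality argument on the spectral sequence. Suppose for contradiction that $\bar R$ is not Koszul, and pick $(p_0, j_0)$ with $\beta^{\bar R}_{p_0, j_0}(\K) \ne 0$ and $j_0 > p_0$, choosing $p_0$ minimal and then $j_0$ minimal. Examine the entry $(E_2)_{p_0, 0, j_0}$. Incoming differentials $d_r$ for $r \ge 2$ would originate at bidegree $(p_0 + r, 1 - r)$, whose second coordinate is negative, so they are absent; any outgoing differential $d_r \colon (p_0, 0) \to (p_0 - r, r - 1)$ has target contained in a subquotient of $\beta^{\bar R}_{p_0 - r, j_0 - r + 1}(\K)$, which by the minimality of $p_0$ vanishes unless $j_0 - r + 1 = p_0 - r$, forcing $j_0 = p_0 - 1$ and contradicting $j_0 > p_0$. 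Hence $(E_2)_{p_0, 0, j_0}$ survives to $E_\infty$ and produces $\beta^R_{p_0, j_0}(\K) \ne 0$ off the diagonal, contradicting Koszulness of $R$.

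The final ``in particular'' clause then follows by iteration: if $R \cong A/(\ell_1, \ldots, \ell_r)$ is LG-quadratic, then the G-quadratic algebra $A$ is Koszul (as recalled in Section~\ref{back}), and applying the direction ``Koszul descends modulo a regular linear form'' successively along $A \to A/(\ell_1) \to \cdots \to A/(\ell_1, \ldots, \ell_r) = R$ transfers Koszulness down the chain to $R$.
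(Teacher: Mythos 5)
Your proposal is correct and follows essentially the same route as the paper: the same change-of-rings spectral sequence with $M=N=\K$, the same computation $\tor^R_q(\K,\overline{R})\cong\K(-q)$ from the resolution by multiplication by $\ell$, the same degree argument for the collapse when $\overline{R}$ is Koszul, and the same minimal-counterexample survival argument at $(E_2)_{p_0,0}$ for the converse. The only cosmetic differences are that you use the standard homological direction for the differentials (the paper states the opposite convention, which does not affect either argument since both incoming and outgoing maps are checked on the relevant graded strand) and that you additionally take $j_0$ minimal, which is not needed.
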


\begin{proof} Apply the change of rings spectral sequence to the canonical quotient map $R\rightarrow \overline{R}$, where $M=N=\K$.

We can simplify the terms on the second page as follows. 

First, we can compute $\tor^R_q(\K,\overline{R})$ by taking the $R$-free resolution \[\dots \rightarrow R(-2)\xrightarrow{l\cdot} R(-1)\xrightarrow{l\cdot} R\rightarrow \overline{R}\rightarrow 0.\] Tensoring with $\K$, the maps become zero, so the $q$th homology is just $\K(-q)$.

Let $F_\bullet \rightarrow \K$ be a minimal graded free resolution of $\K$ over $\overline{R}$, and let $F_i=\bigoplus_{j\in \Z}\overline{R}(-j)^{\beta_{ij}}$. Twisting preserves exactness, so $F_\bullet(-q)\rightarrow \K(-q)$ is also a minimal graded free resolution over $\overline{R}$. Tensoring with $\K$, the maps become zero, so the $p$th homology is \[\tor^{\overline{R}}_p(\K(-q),\K)=\bigoplus_{j\in \Z}\K(-q-j)^{\beta_{p,j}}.\]

Suppose $\overline{R}$ is Koszul. Then the bigraded Betti numbers $\beta_{i,j}=0$ whenever $i\neq j$, so $(E_2)_{p,q}=\K(-p-q)^{\beta_{p,p}}$. The differential goes right two places and down one, i.e. from $\K(-p-q)$ to $\K(-p-q-1)$. Since the differentials are degree-preserving, they must therefore be zero. Thus, the spectral sequence collapses at the second page. Since there is a filtration of $\tor^R_{p+q}(M,N)$ given by the terms $(E_\infty)_{p,q}$, we have \[\dim_\K\tor^R_{p+q}(M,N)_j=\sum_{i=0}^{p+q}\dim_\K\K(-p-q)^{\beta{p,p}}_j,\] which is zero whenever $j\neq p+q$. Hence $R$ is Koszul.

Now suppose $\overline{R}$ is not Koszul. Then there exist numbers $p,j\geq 0$ such that $\beta_{p,j}\neq 0$ and $p\neq j$. Since $P_\bullet$ is minimal and graded, we must have $j>p$. Assume $p$ is minimal. Then the differential on page two of the spectral sequence with target $(E_2)_{p,0}$ comes from $\K(-p+1)^{\beta_{p,p}}$. By grading, that differential must be zero. Likewise, the differential on page two whose source is $(E_2)_{p,0}$ has $(E_2)_{p+2,-1}=0$ as its target, so that map must also be zero. So this term of the spectral sequence has stabilized, and the $(E_\infty)_{p-i,i}$-filtration of $\tor^R_p(\K,\K)$ gives us \[\dim_\K\tor^R_p(\K,\K)_j\geq \dim_\K\tor^{\overline{R}}_p(\K,\K)_j\geq 1.\] This proves that $R$ is not Koszul either. \end{proof}

A very similar theorem is proved by Conca, De Negri, and Rossi \cite[Theorem 3.2]{CDR13} in the commutative setting.

\section{Depth of exterior edge ideals and LG-quadratic algebras}\label{depth}

Our first task is to construct an LG-quadratic quotient of an exterior algebra that is not G-quadratic.  A proposed example is given by Thieu \cite[Example 5.2.2(ii)]{Thieu13}, however we show in Example~\ref{ThieuExample} that while Thieu's example has no quadratic Gr\"obner basis in a fixed system of coordinates, it is a monomial ideal after a change of coordinates and hence G-quadratic afterall.  We construct a different example as a quotient of an algebra defined by the edge ideal of a graph over an exterior algebra.  We begin with notation on edge ideals of graphs over the exterior algebra.

Let $G = (V,F)$ denote a finite, simple graph on vertex set $V = \{1,\ldots,n\}$ with edge set $F$.  Let $E = \bigwedge_\K \langle e_1,\ldots,e_n\rangle$ denote the corresponding exterior algebra and let $S = \K[x_1,\ldots,x_n]$ the corresponding symmetric algebra.  We define the symmetric and exterior edge ideals associated to $G$ as follows:
\[I_S(G) = \left(\left\{x_ix_j\,|\,\{i,j\} \in F\right\}\right) \subseteq S, \quad   \quad I_E(G) = \left(\left\{e_ie_j\,|\,\{i,j\} \in F\right\}\right) \subseteq E.\]
Edge ideals over polynomial rings have been extensively studied (see e.g. \cite{MV12}, \cite{VanTuyl13}, or \cite{DHS13}) while those over exterior algebras have received less attention; see e.g. \cite{HOEFEL12}.  More generally, monomial ideals over $E$ were studied by Aramova, Herzog, and Hibi \cite{AHH97} and by Aramova, Avramov, and Herzog \cite{AAH00}.  
For terminology on graphs, we refer the reader to \cite{BM76}.

First we collect two basic lemmas comparing the singular varieties of modules in extensions of the underlying exterior algebras.




\begin{lem}\label{lemNewVar} Let $E=\bigwedge_\K\langle e_1,\ldots,e_{n} \rangle$ denote the exterior algebra on $n$ variables and let $M$ be an $E$-module. If $E'=E\otimes_\K \bigwedge_\K\langle x\rangle$ and $M'=M\otimes_E E'$, then $V_{E'}(M') = V_{E}(M)$.
\end{lem}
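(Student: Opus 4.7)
The plan is to argue by a direct calculation, unpacking the $E'$-module structure of $M'$ so that we can check regularity element-by-element on $E'_1 = E_1 \oplus \K x$.

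First, I would set up notation. As a $\K$-vector space, $M' \cong M \oplus xM$, where I write a general element as $m + xn$ with $m,n \in M$. Using the anticommutation $x e_i = -e_i x$ in $E'$ and the relation $x^2=0$, the action of an arbitrary element $\ell = \ell_0 + cx \in E'_1$ (with $\ell_0 \in E_1$, $c \in \K$) on $M'$ is
\[
\ell \cdot (m + xn) = \ell_0 m + c x m - x \ell_0 n = \ell_0 m \; + \; x(cm - \ell_0 n).
\]
From this single formula I can read off both the kernel and the image of multiplication by $\ell$ on $M'$.

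Next, I split into two cases. If $c \neq 0$, I claim $\ell$ is $M'$-regular: given $m + xn$ with $\ell \cdot (m + xn) = 0$, the formula forces $\ell_0 m = 0$ and $cm = \ell_0 n$, and then one verifies that $z := c^{-1} n \in M'$ satisfies $\ell z = m + xn$, using $\ell_0^2 = 0$. Hence no $\ell \in E'_1 \setminus E_1$ lies in $V_{E'}(M')$. If $c = 0$, the action reduces to $\ell_0 \cdot (m + xn) = \ell_0 m - x \ell_0 n$, so
\[
\ker(\ell_0 \mid M') = \ker(\ell_0 \mid M) \oplus x\,\ker(\ell_0 \mid M), \qquad \ell_0 M' = \ell_0 M \oplus x \ell_0 M.
\]
These two subspaces agree iff $\ker(\ell_0 \mid M) = \ell_0 M$, i.e. iff $\ell_0$ is $M$-regular. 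Therefore $\ell_0 \in V_{E'}(M')$ iff $\ell_0 \in V_E(M)$.

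Combining the two cases gives $V_{E'}(M') = V_E(M)$ under the inclusion $E_1 \hookrightarrow E'_1$, which is the desired equality. There is no real obstacle here; the only thing requiring care is bookkeeping the sign from $xe_i = -e_i x$ when writing down the action on $M' = M \oplus xM$, and checking that the candidate preimage $z = c^{-1}n$ really does reproduce $m + xn$ in the regular case (which uses $\ell_0 m = 0$ rewritten as $m = c^{-1}\ell_0 n$ together with $\ell_0^2 = 0$).
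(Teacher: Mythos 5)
Your proof is correct and follows essentially the same route as the paper's: decompose $M' = M \oplus xM$, compute the action of $\ell_0 + cx$ on $m + xn$, and split into the cases $c \neq 0$ and $c = 0$ to conclude that $\ell_0 + cx$ is $M'$-regular iff $c \neq 0$ or $\ell_0$ is $M$-regular. The only differences are cosmetic: in the $c = 0$ case you compare kernel and image componentwise via the direct sum decomposition rather than chasing an explicit preimage, and your appeal to $\ell_0^2 = 0$ in the $c \neq 0$ case is actually unnecessary (the identity $m = c^{-1}\ell_0 n$ alone gives $\ell\cdot(c^{-1}n) = m + xn$).
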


\begin{proof} Note that as $E$-modules, $E' = E \oplus Ex$ and $M' = M \oplus Mx$.  Let $m' = (a,bx) \in M'$, where $a,b \in M$ are homogeneous with $\deg(b) = \deg(a) - 1$.  Let $\ell' = (\ell, \alpha x) \in E_1'$, where $\ell \in E_1$ and $\alpha \in \K$.  We must show that $\ell'$ is regular on $M'$ if and only if $\ell$ is regular on $M$ or $\alpha \neq 0$.

First suppose $\ell$ is regular on $M$ and  $\ell' m' = 0$.  Then
\[\ell'm' = (\ell a, \ell bx + (-1)^{\deg(a)}\alpha a x) = 0,\]
and both coordinates vanish.  Since $\ell$ is $M$-regular and $\ell a = 0$, we have $a = \ell a'$ for some $a' \in M$ with $\deg(a') = \deg(a) - 1$.  Since 
\[0 = \ell bx + (-1)^{\deg(a)}\alpha a x =  \ell b x + (-1)^{\deg(a)} \alpha \ell a' x = \ell(b + (-1)^{\deg(a)} \alpha a')x, \]
we have $b + (-1)^{\deg(a)}\alpha a' = \ell b'$ for some $b' \in M$.  Therefore
\begin{align*}
    \ell'(a',b'x) &= (\ell a', \ell b'x + \alpha x a') \\
    &= (a, (b + (-1)^{\deg(a)} \alpha a')x + (-1)^{\deg(a')} \alpha a' x) \\
    &= (a, bx) = m'.
\end{align*}
Therefore $\ell'$ is $M'$-regular.


Suppose $\ell'm'=0$ and $\alpha \neq 0$. Then $a=(-1)^{\deg(a)}\alpha^{-1}\ell b$ and it follows that \[\ell'((-1)^{\deg(a)}\alpha^{-1}b,0)=(a,bx).\] Hence $\ell'$ is $M'$-regular.

Conversely, suppose $\ell' = (\ell, \alpha x)$ is $M'$-regular.  If $\alpha \neq 0$, we are done.  Otherwise, suppose $\alpha = 0$ and $\ell a = 0$ with $a \in M$. Then $\ell'(a, 0) = 0$ and so $(a,0) = \ell' (c,dx)$, for some $c,d \in M$.  In particular, $\ell c = a$, showing that $\ell$ is $M$-regular.  The claim then follows.
\end{proof}

\begin{lem}\label{lemKillNewVar} Let $E=\bigwedge_\K\langle e_1,\ldots,e_{n} \rangle$ denote the exterior algebra on $n$ variables and let $M$ be an $E$-module. If $E'=E\otimes_\K \bigwedge_\K\langle x\rangle$ and $M'=M\otimes_E E'/(x)$, then $V_{E'}(M') = \bigcup_i \mathrm{span}_\K\langle A_i \cup \{x\} \rangle$, where $V_{E}(M) = \bigcup_i \mathrm{span}_\K\langle A_i \rangle$.
\end{lem}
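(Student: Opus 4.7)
The plan is to exploit the fact that $E'/(x) \cong E$ as graded $\K$-algebras (via $x \mapsto 0$), so that $M' = M \otimes_E E'/(x) \cong M$ as $E$-modules, but $M'$ carries the additional structure of an $E'$-module in which $x$ acts as zero. This will reduce the computation of $V_{E'}(M')$ to a direct comparison with $V_E(M)$, plus the observation that the extra variable $x$ is always a singular direction.

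First I would parametrize an arbitrary linear form in $E'_1$ uniquely as $\ell' = \ell + \alpha x$ with $\ell \in E_1$ and $\alpha \in \K$. Since $x \cdot M' = 0$ in $M'$, the action of $\ell'$ on $M'$ agrees with the action of $\ell$, i.e., $\ell' m' = \ell m'$ for every $m' \in M'$. Therefore $\{m' \in M' : \ell' m' = 0\} = \{m' \in M' : \ell m' = 0\}$ and $\ell' M' = \ell M'$. Using the $E$-module isomorphism $M' \cong M$, this immediately yields that $\ell'$ is $M'$-regular if and only if $\ell$ is $M$-regular, independent of the scalar $\alpha$.

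Consequently $V_{E'}(M')$ is precisely the preimage of $V_E(M)$ under the projection $E'_1 \to E_1$ sending $x$ to $0$, namely $V_{E'}(M') = V_E(M) + \K x$ as subsets of $E'_1$. If $V_E(M) = \bigcup_i \mathrm{span}_\K \langle A_i \rangle$ is a decomposition into linear subspaces (which exists by Theorem~\ref{thmAAH}(1)), then adding the one-dimensional subspace $\K x$ to each component gives $V_{E'}(M') = \bigcup_i \mathrm{span}_\K\langle A_i \cup \{x\}\rangle$, as claimed.

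There is no real obstacle here: the argument is essentially bookkeeping with the action of $x$. The one point that requires care is verifying that $M'$ really does isomorphic to $M$ as an $E$-module (rather than some larger object), which follows because the inclusion $E \hookrightarrow E'$ splits the quotient $E' \twoheadrightarrow E'/(x) \cong E$, so tensoring $M$ over $E$ with $E'/(x) \cong E$ recovers $M$. Once this identification is made, the regularity criterion and the description of the singular variety follow immediately.
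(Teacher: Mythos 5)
Your proof is correct and takes essentially the same route as the paper's: both identify $M'$ with $M$ as an $E$-module, observe that $x$ annihilates $M'$ so that $(\ell+\alpha x)m'=\ell m'$, and conclude that $\ell+\alpha x$ is $M'$-regular if and only if $\ell$ is $M$-regular, independently of $\alpha$. The only difference is that you spell out the final bookkeeping (that $V_{E'}(M')$ is the preimage of $V_E(M)$ under the projection killing $x$), which the paper leaves implicit.
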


\begin{proof} We use that as $E$-modules, $E' = E \oplus Ex$ and $M' = M$.  Let $m \in M$ and $\ell' = (\ell, \alpha x) \in E'_1$, where $\ell \in E_1$ and $\alpha \in \K$.  It is clear that $\ell'm = \ell m$, and thus $\ell'$ is $M$-regular if and only if $\ell$ is $M$-regular.  The claim follows. 
\end{proof}

Next we compute the depth and singular variety of the edge ideal of a path.  The corresponding result over the polynomial ring is well-known.  

\begin{prop}\label{pathprop} Let $P_n$ denote the  path graph on $n$ vertices, and let $E = \bigwedge_\K\langle e_1,\ldots, e_n \rangle$ be the corresponding exterior algebra. 
\begin{enumerate}
    \item If $n \equiv 1\,(\!\!\!\mod 3)$, then $\depth_E(E/I_E(P_n)) = 1$ and
    \[V_E(E/I_E(P_n)) = \bigcup_{i = 0}^{\lfloor n/3 \rfloor} \mathrm{span}_\K\langle e_1,\ldots,\hat{e_{3i+1}},\ldots,e_n \rangle,\]
    where $\hat{e_i}$ denotes that $e_i$ is omitted.  In particular, $\sum_{i = 0}^{\lfloor n/3 \rfloor} e_{3i+1}$ is a regular element on $E/I_E(P_n)$.
    \item If $n \not \equiv 1 \,(\!\!\!\mod 3)$, then $\depth_E(E/I_E(P_n)) = 0$ and $V_E(E/I_E(P_n)) = E_1$.
\end{enumerate}
\end{prop}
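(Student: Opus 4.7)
The proof is by induction on $n$, with base cases $n \in \{1, 2, 3\}$ verified by direct computation. Write $M_n := E/I_E(P_n)$.

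\textbf{Depth via Hilbert series.} The $\K$-basis of $M_n$ indexed by independent sets of $P_n$ yields $\HS_{M_n}(t) = I(P_n, t)$, satisfying the recurrence $I(P_n, t) = I(P_{n-1}, t) + t I(P_{n-2}, t)$. Evaluating at $t = -1$ produces a period-$3$ sequence vanishing exactly when $n \equiv 1 \pmod 3$; a derivative check shows $(1+t)^2 \nmid I(P_n, t)$ in that case. Combined with $(1+t)^{\depth_E(M_n)} \mid \HS_{M_n}$, this gives $\depth_E(M_n) = 0$ in case (2) (hence $V_E(M_n) = E_1$) and $\depth_E(M_n) \leq 1$ in case (1).

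\textbf{A regular element in case (1).} For $n = 3k+1$, we exhibit $\ell := \sum_{i=0}^k e_{3i+1}$ as regular on $M_n$. Under the decomposition $M_n \cong M_{n-1} \oplus M_{n-2} \cdot e_n$ as $E_{n-1}$-modules (split by whether $e_n$ divides a basis monomial), multiplication by $\ell$ is block triangular: the diagonal blocks are multiplication by $\ell'' := \sum_{i=0}^{k-1} e_{3i+1}$ on $M_{n-1}$ and on $M_{n-2}$, and the off-diagonal block is the surjection $M_{n-1} \onto M_{n-2}$ (killing $e_{n-1}$) composed with $e_n$. Iterating the decomposition inside $M_{n-1}$ and $M_{n-2}$, and invoking the inductive regularity of $\ell''$ on $M_{n-3}$ (with $n-3 = 3(k-1)+1$), reduces the kernel-versus-image verification to a block matrix manipulation.

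\textbf{The singular variety.} Let $H_i := \mathrm{span}\langle e_1, \ldots, \hat{e_{3i+1}}, \ldots, e_n\rangle$. The inclusion $H_i \subseteq V_E(M_n)$ follows by splitting $M_n = N_0 \oplus N_1$ according to whether a basis monomial contains $e_{3i+1}$; for $\ell \in H_i$ (so $a_{3i+1} = 0$), multiplication by $\ell$ is block diagonal. The summand $N_1 \cong e_{3i+1} \cdot (M_{P_{3i-1}} \otimes_\K M_{P_{n-3i-2}})$, where $P_n$ with closed neighborhood of $3i+1$ removed splits into two subpaths of orders $3i-1$ and $3(k-i)-1$, both $\equiv 2 \pmod 3$ (or empty at the boundary). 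By induction both tensor factors have depth $0$, and a Künneth computation for the Koszul complex of $\ell$ forces $\ell$ to act singularly on the tensor product, hence on $N_1$ and so on $M_n$. For $V_E(M_n) \subseteq \bigcup_i H_i$, one extends Paragraph~2 to general coefficients to show any $\ell = \sum a_j e_j$ with all $a_{3i+1} \neq 0$ is regular on $M_n$; combined with $V_E(M_n)$ being a union of linear subspaces by Theorem~\ref{thmAAH}(1), any linear component of $V_E(M_n)$ not contained in some $H_i$ would (for infinite $\K$, and in general by flat base change) contain a regular element, a contradiction.

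\textbf{Main obstacle.} The crux is the block-triangular regularity verification in Paragraphs~2--3: tracking how inductive regularity of $\ell''$ on $M_{n-3}$ propagates through both layers of the decomposition, especially confirming that the off-diagonal surjection $M_{n-1} \onto M_{n-2}$ exactly compensates for the failure of regularity of $\ell''$ on $M_{n-1}$ and on $M_{n-2}$ individually.
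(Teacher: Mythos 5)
There is a genuine gap, and you have located it yourself: the ``main obstacle'' paragraph is not a proof sketch but an unproved claim, and it is exactly where the content of part (1) lives. In your decomposition $M_n \cong M_{n-1}\oplus M_{n-2}\cdot e_n$ the map $\ell = \ell''+e_n$ acts as a triangular matrix whose diagonal blocks are multiplication by $\ell''$ on $M_{n-1}$ and $M_{n-2}$; but $n-1\equiv 0$ and $n-2\equiv 2 \pmod 3$, so by your own part (2) both diagonal blocks are \emph{singular}. The total complex is then the mapping cone of the chain map $\pi e_n\colon (M_{n-1},\ell'')\to(M_{n-2}e_n,\ell'')$, and its exactness is equivalent to $\pi$ inducing an isomorphism on the (nonzero!) $\ell''$-homologies of the two pieces. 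That is a substantive new statement requiring its own induction, not a ``block matrix manipulation,'' and nothing in your outline establishes it. Since the Hilbert-series argument only gives $\depth\le 1$ in case (1) (divisibility of $\HS$ by $(1+t)$ does not force positive depth), the regularity of $\sum_i e_{3i+1}$ cannot be waved through; and your reverse inclusion $V_E(M_n)\subseteq\bigcup_i H_i$ is deferred to ``extending Paragraph 2,'' so it inherits the same gap.

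The parts you do carry out are fine and in places nicer than the paper's route: the independence-polynomial evaluation at $t=-1$ disposes of part (2) cleanly, and the splitting $M_n=N_0\oplus N_1$ with the K\"unneth argument on $N_1\cong e_{3i+1}(M_{P_{3i-1}}\otimes M_{P_{n-3i-2}})$ is a correct and direct proof that each $H_i\subseteq V_E(M_n)$. What you are missing is the device the paper uses to get the \emph{upper} bound on the singular variety without ever verifying regularity by hand: the short exact sequences built from colon ideals, e.g.\ $0\to E/(J:e_{n-1}e_n)\to E/J\to E/I\to 0$ with $J=I_E(P_{n-1})E$, together with Theorem~\ref{thmAAH}(2), which says each of the three singular varieties is contained in the union of the other two. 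Since $E/J$ and $E/(J:e_{n-1}e_n)$ are tensor products of path algebras on fewer vertices with free or trivial factors, Lemmas~\ref{lemNewVar} and \ref{lemKillNewVar} plus induction compute their singular varieties exactly, and the two-out-of-three containment (plus the fact that singular varieties are finite unions of linear subspaces, none containing another) pins down $V_E(E/I)$ as the stated union. The regular element then comes for free, being any linear form lying in no component. I would either adopt that mechanism or supply a genuine inductive proof that your connecting map is a quasi-isomorphism; as written, the proposal does not prove part (1).
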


\begin{proof} We proceed by induction on $n$.  If $n = 1$, then $I = (0)$, $E/I \cong E = \bigwedge_\K\langle e_1\rangle$, and $V_E(E/I) = E_1$.  If $n = 2$, then $I = (e_1e_2)$ and $E/I = \bigwedge_\K\langle e_1, e_2 \rangle/(e_1e_2)$, so that $xy = 0$ for all $x,y \in E_1$.  Hence there are no regular elements on $E/I$ and $V_E(E/I) = E_1$.  If $n = 3$, then $I = (e_1e_2, e_2e_3) \subseteq \bigwedge_\K\langle e_1,e_2, e_3 \rangle$, and $I:e_2 = (e_1,e_2,e_3)$.  Again there are no regular elements and $V_E(E/I) = E_1$.

Now suppose $n \ge 4$ and that the proposition holds for smaller $n$.  Let $I = I(P_n) = (e_1e_2, e_2e_3,\ldots,e_{n-1}e_n) \subseteq E$.  Let $J = I_E(P_{n-1})E$.  One computes that $J:(e_{n-1}e_n) =  I_
E(P_{n-2})E + (e_{n-2},e_{n-1},e_n)$.  We have the following short exact sequence of $E$-modules:
\[0 \to \frac{E}{J:(e_{n-1}e_n)} \to \frac{E}{J} \to \frac{E}{I} \to 0.\]
Note that $E/J = \frac{\bigwedge_\K\langle e_1,\ldots,e_{n-1} \rangle}{I_E(P_{n-1})} \otimes_{\K} \bigwedge_\K\langle e_n \rangle$ and 
\[E/(J:(e_{n-1}e_n)) = \frac{\bigwedge_\K\langle e_1,\ldots,e_{n-3} \rangle}{I_E(P_{n-3})} \otimes_{\K} \frac{\bigwedge_\K\langle e_{n-2}, e_{n-1}, e_n \rangle}{(e_{n-2}, e_{n-1}, e_n)}.\]

If $n \equiv 1\,(\!\!\!\mod 3)$, then by induction and Lemma~\ref{lemNewVar},  we have $V_{E}(E/J) = \mathrm{span}_\K \langle e_1,\ldots,e_{n-1}\rangle$.  Similarly, by Lemma~\ref{lemKillNewVar}, \[V_E(E/(J:(e_{n-1}e_n))) = \bigcup_{i = 0}^{\lfloor n/3 \rfloor - 1} \mathrm{span}_\K\langle e_1,\ldots,\hat{e_{3i+1}},\ldots,e_{n-3},e_{n-2},e_{n-1},e_n \rangle.\]
Since none of the components of $V_E(E/J)$ and $V_E(E/(J:(e_{n-1e_n})))$ contain each other, it follows from Theorem~\ref{thmAAH}(2) that $V_E(E/I) = V_E(E/J) \cup V_E(E/(J:(e_{n-1e_n})))$ and the conclusion follows.

If $n \equiv 2\,(\!\!\!\mod 3)$, we use the same short exact sequence as above, but in this case, Lemmas~\ref{lemNewVar} and \ref{lemKillNewVar} combined with the induction hypothesis imply that $V_E(E/(J:(e_{n-1e_n}))) = E_1$, while $V_E(E/J) \neq E_1$.  Again, by Theorem~\ref{thmAAH}(4), we have $V_E(E/I) = V_E(E/J) \cup V_E(E/(J:(e_{n-1e_n}))) = E_1$.

Finally, if $n \equiv 0\,(\!\!\!\mod 3)$, we consider the following short exact sequence of $E$-modules:
\[0 \to \frac{E}{I:e_n} \to \frac{E}{I} \to \frac{E}{I + (e_n)} \to 0.\]
As $E/(I + (e_n) = \bigwedge_\K\langle e_1,\ldots,e_{n-1} \rangle/I_E(P_{n-1}) \otimes_{\K} \bigwedge_\K\langle e_n \rangle / (e_n),$ we have $V_E(E/(I + (e_n)) = E_1$ by induction and Lemma~\ref{lemKillNewVar}.  One computes that \[I:e_n = (e_1e_2,\ldots,e_{n-3}e_{n-2},e_{n-1},e_n),\] and so 
\[E/(I:e_n) = \bigwedge_\K\langle e_1,\ldots,e_{n-2} \rangle/I_E(P_{n-2}) \otimes_{\K} \bigwedge_\K\langle e_{n-1},e_n \rangle / (e_{n-1},e_n),\]
 and thus $V_E(E/(I:e_n)) \neq E_1$ by Lemma~\ref{lemKillNewVar} and the inductive hypothesis.  Finally by Theorem~\ref{thmAAH}(2), we get $V_E(E/I) = E_1$.
\end{proof}

If we consider the commutative edge ideal $I_S(P_n)$ in a polynoial ring $S = \K[x_1,\ldots,x_n]$, then $\depth_S(S/I_S(P_n))) = \left \lceil \frac{n}{3} \right \rceil$ (see e.g. \cite[Corollary 5.10]{DS13}), and we see that $\depth_S(S/I_S(G)) - \depth_E(E/I_E(G))$ can be arbitrarily large.  

The previous result will be used to construct an LG-quadratic ideal that is not G-quadratic.  First, we need the following result on Hilbert functions, which could be checked by brute force, computing the Hilbert series of $E/I_E(G)$ for each graph with $6$ vertices and $6$ edges.  We give a more conceptual argument.

\begin{lem}\label{graphlemma} There is no graph $G$ such that $\HS_{E/I_E(G)}(t) = 1+6t+9t^2+t^3$. \end{lem}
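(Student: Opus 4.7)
The plan is to translate the Hilbert series into combinatorial data about $G$, pass to the complement graph $\bar{G}$, and derive a contradiction via a short edge-counting argument.

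First, I would use the standard observation that a squarefree monomial $e_{i_1}\cdots e_{i_k}$ represents a nonzero class in $E/I_E(G)$ precisely when $\{i_1,\ldots,i_k\}$ is an independent set of $G$; hence $\HF(E/I_E(G),k)$ equals the number of independent sets of size $k$ in $G$. Reading off the coefficients of $1+6t+9t^2+t^3$, such a graph $G$ would have $6$ vertices, $\binom{6}{2}-9=6$ edges, exactly one independent set of size $3$, and no independent set of size $\geq 4$. Equivalently, the complement $\bar{G}$ would have $6$ vertices, $9$ edges, exactly one triangle, and no $K_4$.

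Next, let $T=\{a,b,c\}$ be this unique triangle in $\bar{G}$ and $U=V(\bar{G})\setminus T$. I would partition the $9$ edges of $\bar{G}$ into three parts and bound each:
\begin{itemize}
\item Inside $T$: exactly $3$ edges (the triangle itself).
\item Inside $U$: any triangle in $U$ would be a second triangle of $\bar{G}$, so the induced subgraph on $U$ is triangle-free on $3$ vertices and has at most $2$ edges.
\item Between $T$ and $U$: if some $v\in U$ had two neighbors $i,j\in T$, then $\{i,j,v\}$ would be a second triangle, contradicting uniqueness; so each $v\in U$ has at most one neighbor in $T$, giving at most $3$ cross edges.
\end{itemize}
Summing yields $|E(\bar{G})|\leq 3+2+3=8$, contradicting $|E(\bar{G})|=9$.

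There is no real obstacle here, since the bookkeeping is forced once one passes to the complement; the main content is just recognizing that the Hilbert series constrains the independence structure of $G$ so tightly that $\bar{G}$ would need to be a $K_4$-free graph of near-Turán density (one edge below the Turán bound $\mathrm{ex}(6,K_4)=10$) with only a single triangle, which the above partition rules out. Consequently, no graph $G$ realizes the Hilbert series $1+6t+9t^2+t^3$.
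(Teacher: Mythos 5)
Your proof is correct and is essentially the paper's argument transported to the complement graph: your bounds of at most $3$ cross edges and at most $2$ edges inside $U$ in $\bar{G}$ correspond exactly to the paper's claims that at most three of the nine potential cross edges of $G$ can be eliminated and that eliminating all three edges inside $\{d,e,f\}$ would create a second independent triple. (One cosmetic slip in your closing aside: $\mathrm{ex}(6,K_4)=12$, not $10$, but that remark plays no role in the argument.)
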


\begin{proof} The $n$th coefficient on the Hilbert series of a graph is the number of independent sets of $G$ of size $n$.  So if $G$ is a graph with such a Hilbert series, then it must have $6$ vertices, $\binom{6}{2}-9=6$ edges, and precisely one set of three vertices such that no two of them are adjacent.

Suppose we have a graph $G$ on $6$ vertices where $\{a,b,c\}$ is the lone independent set.

\begin{center}
\begin{tikzpicture}
  [scale=.8,auto=left,node/.style={circle,fill=black!30},rednode/.style={circle,fill=red!50}]
\node[rednode] (a) at (2,6) {a};
\node[rednode] (b) at (0,3) {b};
\node[rednode] (c) at (2,0) {c};
\node[node] (d) at (6,6) {d};
\node[node] (e) at (8,3) {e};
\node[node] (f) at (6,0) {f};

\foreach \from/\to in {a/d,a/e,a/f,b/d,b/e,b/f,c/d,c/e,c/f}
    \draw[dashed,red] (\from) -- (\to);
    
\foreach \from/\to in {d/e,d/f,e/f}
    \draw[dashed] (\from) -- (\to);

\end{tikzpicture}
\end{center}

\noindent That leaves $9$ potential edges with one vertex in $\{a,b,c\}$ and the other in $\{d,e,f\}$ (colored red), and three with both vertices in $\{d,e,f\}$ (colored black). Since we want $G$ to have $6$ edges, we have to eliminate $6$ of the potential edges. 

However, we can eliminate no more than three of the red edges; otherwise, two of the eliminated edges would share a vertex in $\{d,e,f\}$, creating another independent set (the shared vertex and the two from $\{a,b,c\}$ corresponding to the other endpoints of those two edges).

But then we would be forced to eliminate all three black edges, making $\{d,e,f\}$ a second independent set. In any case, it is impossible for $G$ to have precisely one independent set of size $3$. \end{proof}

We can now construct our example as a quotient of an edge ideal algebra.

\begin{thm}\label{LGnotG} There exist  LG-quadratic quotients of exterior algebras that are not G-quadratic.
\end{thm}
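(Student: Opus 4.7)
My plan is to produce $R$ as a quotient of the exterior edge algebra of a path graph by a regular linear form. Specifically, take $n = 7$, $E = \bigwedge_\K\langle e_1,\ldots,e_7\rangle$, and set $A = E/I_E(P_7)$. Since $7 \equiv 1 \pmod{3}$, Proposition~\ref{pathprop}(1) supplies the regular element $\ell = e_1 + e_4 + e_7 \in A_1$; let $R = A/(\ell)$. The generators of $I_E(P_7)$ are quadratic monomials and so constitute a quadratic Gr\"obner basis in any monomial order, making $A$ G-quadratic and therefore $R$ LG-quadratic by construction.

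Next I would compute the Hilbert series of $R$. The $k$-th coefficient of $\HS_A(t)$ counts the independent sets of size $k$ in $P_7$, so $\HS_A(t) = 1 + 7t + 15t^2 + 10t^3 + t^4$. Regularity of $\ell$ produces a short exact sequence $0 \to R(-1) \xrightarrow{\ell} A \to R \to 0$ (using $\ell^2 = 0$ in the exterior algebra for well-definedness), which yields
\[\HS_R(t) = \HS_A(t)/(1+t) = 1 + 6t + 9t^2 + t^3.\]
In particular $\dim_\K R_1 = 6$, which is the input needed to apply Lemma~\ref{graphlemma}.

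The final step is to derive the contradiction from G-quadraticity. A quadratic Gr\"obner basis cannot reduce a nonzero linear element of its ideal, so any G-quadratic presentation is automatically minimal. Hence, if $R$ were G-quadratic, then $R \cong E'/J$ with $E' = \bigwedge_\K\langle f_1,\ldots,f_6\rangle$ and $J$ admitting a quadratic Gr\"obner basis in some coordinates. Then $in_<(J)$ is a quadratic monomial ideal in $E'$; since $f_i^2 = 0$, every such ideal is of the form $I_{E'}(G)$ for a simple graph $G$ on six vertices. Because passage to the initial ideal preserves Hilbert series (\cite[Corollary 1.2]{AHH97}), we would have $\HS_{E'/I_{E'}(G)}(t) = 1 + 6t + 9t^2 + t^3$, directly contradicting Lemma~\ref{graphlemma}. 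The principal obstacle is conceptual rather than computational: G-quadraticity cannot be tested in any single coordinate system or monomial order, and it is exactly the combinatorial Lemma~\ref{graphlemma} that allows one to eliminate all of them simultaneously via a single Hilbert-series condition.
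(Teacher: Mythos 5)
Your proposal is correct and follows essentially the same route as the paper: the same example $R = E/(I_E(P_7) + (e_1+e_4+e_7))$, with LG-quadraticity from Proposition~\ref{pathprop}, the Hilbert series computation $\HS_R(t) = 1+6t+9t^2+t^3$, and the contradiction via Lemma~\ref{graphlemma} after passing to the initial ideal of a hypothetical quadratic Gr\"obner basis. The only addition is your explicit justification that a G-quadratic presentation must live in a six-variable exterior algebra, a point the paper leaves implicit; this is a welcome clarification rather than a divergence.
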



\begin{proof}
Consider the graph $P_7$, a path of length $7$, with corresponding edge ideal 
\[I_E(P_7) = (e_1e_2, e_2e_3, e_3e_4, e_4e_5, e_5e_6, e_6e_7).\]
A quick computation shows that 
\[\HS_{E/I_E(P_7)}(t) = 1 + 7t + 15t^2 + 10t^3 + t^4 = (1 + 6t+9t^2+t^3)(1+t).\]  (The first three terms are clear.  That $\HF(E/I_E(P_7),3) = 10$  follows from the fact that there are 10 independent sets of size $3$ in $P_7$ and $\HF(E/I_E(P_7),4) = 1$ since there is one independent set $\{x_1, x_3, x_5, x_7\}$ of size $4$.)  By Proposition~\ref{pathprop}, $\depth_E(E/I_E(P_7)) = 1$ and $e_1 + e_4 + e_7$ is a regular element on $E/I_E(P_7)$.  Thus by definition, \[R = \frac{E}{I_E(P_7) + (e_1 + e_4 + e_7)} \cong \frac{\bigwedge_\K \langle e_1,e_2,e_3,e_4,e_5,e_6\rangle}{(e_1e_2, e_2e_3, e_3e_4, e_4e_5, e_5e_6, e_6(e_1 + e_4))}\]
is LG-quadratic and hence Koszul by Theorem~\ref{LG}.  Moreover, $\HS_R(t) = 1 + 6t + 9t^2 + t^3.$  If $R$ were $G$-quadratic, there would be a quadratic monomial ideal $J$ corresponding to a graph on $6$ vertices with the same Hilbert series, but this is impossible by Lemma~\ref{graphlemma}.  Therefore $R$ is not $G$-quadratic.
\end{proof}

\begin{rmk}
This construction gives a simpler proof that the algebra $B(0)$ of Shelton and Yuzvinsky from \cite[Example 5.2]{SY97} is Koszul.  There the authors define 
\[B(0) = \bigwedge\!\phantom{.}_\K \langle x_1, x_2, x_3, x_4, x_5, x_6 \rangle / (x_1x_6,x_2x_4,x_3x_5,x_1x_4,x_2x_5,(x_4-x_3)x_6) \]
as a certain deformation of an Orlik-Solomon algebra of a hyperplane arrangement.  They use a complicated Koszul duality argument to show that it is Koszul.  Here we recover this by recognizing $B(0)$ as $R$ from Theorem~\ref{LGnotG} under the identification:
\[e_1 \leftrightarrow -x_3,\quad e_2 \leftrightarrow x_5, \quad e_3 \leftrightarrow x_2, \quad e_4 \leftrightarrow x_4, \quad e_5 \leftrightarrow x_1, \quad e_6 \leftrightarrow x_6.\]
\end{rmk}

\section{A Koszul  quotient of an exterior algebra which is not LG-quadratic}\label{nonLG}

In the case of commutative algebras, Conca \cite{Conca} has shown that not every Koszul algebra is LG-quadratic.  As of this writing, this is the lone example of a Koszul algebra that is not LG-quadratic.

\begin{eg}[{\cite[Example 1.20]{Conca}}] Consider the polynomial ring $\K[a,b,c,d]$ and the ideal $I = (ac, ad, ab-bd, a^2+bc, b^2)$.  That the quotient ring $R = S/I$ is Koszul follows from a filtration argument.  That $R$ is not LG-quadratic follows by showing that no quadratic monomial ideal has the same h-polynomial.  This example negatively answered a question of Caviglia \cite[Question 1.2.6]{Caviglia04}.  See also \cite[Question 6.4]{ACI10} and \cite[Question 2.12]{CDR13}.
\end{eg}

The authors were motivated to  answer the following parallel question:

\begin{qst} Is there a Koszul quotient of an exterior algebra that is not LG-quadratic?
\end{qst}

The purpose of this section is to answer this question in the affirmative.  One distinction between our construction and that of Conca is that our example is non-obstructed -- that is, there  exist quadratic monomial ideals with the same Hilbert function as our example.  Thus it takes significant work to show that the example we construct is not LG-quadratic.  

To prove Koszulness, we take advantage of the following result of Fr\"oberg and L\"ofwall:

\begin{thm}[{\cite[Theorem 10.2]{FL02}}]\label{FLthm}
Let $\K$ be a field of characteristic $0$, let $E = \bigwedge_\K \langle e_1,\ldots,e_n \rangle$, and let $I = (f_1,\ldots,f_t)$ be an ideal generated by $t$ generic quadrics.  If $t \ge \binom{n}{2} - \frac{n^2}{4}$, then $E/I$ is Koszul.
\end{thm}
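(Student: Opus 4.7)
The plan is to translate the problem via Koszul duality, where the condition $t \ge \binom{n}{2} - n^2/4$ becomes a ``few relations'' hypothesis, and then appeal to a classical Koszulness result for generic quadratic algebras. Writing $E/I$ as the quadratic algebra $T(V^*)/R_A$ with $V^* = \mathrm{span}_\K\langle e_1,\ldots,e_n\rangle$, the relation space decomposes as $R_A = \mathrm{Sym}^2(V^*) \oplus I$, with $I \subset \bigwedge^2 V^*$ lifted into the antisymmetric block of $V^* \otimes V^*$. Under the natural pairing between $V^* \otimes V^*$ and $V \otimes V$, the symmetric and antisymmetric blocks pair separately, so $R_A^\perp = I^\perp \subset \bigwedge^2 V$ is a subspace of dimension $\binom{n}{2} - t \le n^2/4$. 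Hence the Koszul dual $(E/I)^! = T(V)/(I^\perp)$ is a quadratic algebra on $n$ generators with at most $n^2/4$ antisymmetric quadratic relations, and since Koszulness of a quadratic algebra is equivalent to Koszulness of its dual, it suffices to prove that $(E/I)^!$ is Koszul.

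Next, the central input I would invoke is a classical theorem of Anick and L\"ofwall: a generic quadratic algebra $T(W)/(R)$ on $n$ generators with $\dim R \le n^2/4$ is Koszul, with Hilbert series equal to the rational function $1/(1 - nt + (\dim R)\, t^2)$. A genericity argument shows that a generic $I \subset \bigwedge^2 V^*$ yields a generic $I^\perp \subset \bigwedge^2 V$, and the characteristic zero hypothesis is used (via representation-theoretic semicontinuity and Schur-functor decompositions) to ensure that this genericity is preserved by the embedding $\bigwedge^2 V \hookrightarrow V \otimes V$. Once $(E/I)^!$ is placed in the Anick-L\"ofwall range, its Koszulness follows, and duality gives the result for $E/I$.

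The main obstacle is precisely this transfer of genericity from $\bigwedge^2 V^*$ to $V \otimes V$: the Anick-L\"ofwall theorem is phrased for relations which are generic in the full tensor square, while ours are confined to the antisymmetric block. One must verify that a generic subspace of $\bigwedge^2 V$ of dimension at most $n^2/4$ still avoids the obstruction loci that arise in Anick's combinatorial chain analysis, so that the chain resolution of $\K$ over $(E/I)^!$ remains linear. This is typically handled by a dimension count on the variety of obstructed quadratic algebras, combined with an explicit upper bound on the Hilbert series via Anick's resolution and a check that the bound is attained generically; verifying attainment is the technical heart of the Fr\"oberg-L\"ofwall argument in \cite{FL02} and the reason characteristic zero is essential.
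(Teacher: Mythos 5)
First, a point of reference: the paper does not prove this statement at all --- it is quoted verbatim as \cite[Theorem 10.2]{FL02} and used as a black box, so there is no ``paper's own proof'' to compare against. Your proposal is therefore an attempt to reconstruct the Fr\"oberg--L\"ofwall argument, and its overall shape (pass to the Koszul dual, observe that the dual is a quadratic algebra on $n$ generators with $\binom{n}{2}-t\le n^2/4$ relations, then invoke Anick's theorem that a generic quadratic algebra with at most $n^2/4$ relations has global dimension $2$ and is Koszul) is a reasonable and essentially correct strategic outline. The dualization step is fine: writing $E/I = T(V^*)/(\mathrm{Sym}^2 V^* \oplus \tilde I)$ with $\tilde I$ the antisymmetric lift of $I_2$, the orthogonality of $\mathrm{Sym}^2$ and $\bigwedge^2$ in characteristic $0$ gives $(E/I)^! = T(V)/(W)$ with $W = \tilde I^\perp \cap \bigwedge^2 V$ of dimension $\binom{n}{2}-t \le n^2/4$, and Koszulness passes across duality.

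The genuine gap is the one you yourself flag and then do not close: Anick's theorem applies to subspaces $R$ that are generic in the Grassmannian of $r$-planes in the full tensor square $V\otimes V$, whereas your $W$ is confined to $\bigwedge^2 V$, which is a proper closed subvariety of that Grassmannian. A generic point of $\gr(r,\bigwedge^2 V)$ carries no genericity whatsoever as a point of $\gr(r,V\otimes V)$, so Anick's result gives nothing directly; the appeal to ``representation-theoretic semicontinuity and Schur-functor decompositions'' is not an argument. To close this one must either (a) identify the relevant conclusion (global dimension $2$, or the linearity of Anick's chain resolution, or the Hilbert series bound being attained) as an open condition on $\gr(r,\bigwedge^2 V)$ and exhibit a single explicit antisymmetric subspace satisfying it, or (b) rerun Anick's combinatorial chain analysis within the antisymmetric block. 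This verification is exactly the content of the Fr\"oberg--L\"ofwall theorem, so as written the proposal assumes the substance of what it sets out to prove. Everything surrounding that step is sound, but the proof is not complete without it.
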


The primary goal in this section is then to prove the following result.

\begin{thm}\label{KoszulNonLG}
Let $E = \bigwedge_\C \langle e_1,\ldots,e_6 \rangle$.  Consider an ideal $I = (f_1,\ldots,f_6) \subseteq E$ generated by $6$ generic quadrics.  Then $E/I$ is Koszul but not LG-quadratic.
\end{thm}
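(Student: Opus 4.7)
Koszulness is immediate from Theorem~\ref{FLthm}: with $n=6$ and $t=6$ quadrics, the bound $t\ge\binom{n}{2}-n^2/4=6$ holds. For the Hilbert series, generic $f_1,\dots,f_6\in E_2$ are linearly independent, so $\dim(E/I)_2=9$; the multiplication map $E_1\otimes I_2\to E_3$ has source dimension $36$ and target dimension $20$, hence is surjective for generic $f_i$, forcing $(E/I)_k=0$ for $k\ge 3$. Thus $\HS_R(t)=1+6t+9t^2=(1+3t)^2$.

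\textbf{Setup of the contradiction.} Suppose for contradiction that $R$ is LG-quadratic: $R\cong A/(\ell_1,\dots,\ell_r)$ with $A=E'/J$ G-quadratic on $N=6+r$ variables and $\ell_1,\dots,\ell_r\in A_1$ a regular sequence. Killing a regular linear form divides the Hilbert series by $(1+t)$, so $\HS_A(t)=(1+t)^r(1+3t)^2$. In the coordinates witnessing G-quadraticness, $\In_<(J)$ is a squarefree quadratic monomial ideal with the same Hilbert series, i.e.\ $\In_<(J)=I_E(G)$ for some graph $G$ on $N$ vertices whose independence polynomial equals $(1+t)^r(1+3t)^2$.

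\textbf{Classification of candidate graphs.} The next step is to enumerate all such $G$. The independence polynomial factors over connected components, so each component has independence polynomial $(1+t)^a(1+3t)^b$ for some $a,b\ge 0$; a direct count gives exactly $a+3b$ vertices and $3b$ edges. Connectivity forces $3b\ge a+3b-1$, hence $a\le 1$, and an independence-number comparison (trees on $n$ vertices have independence number at least $\lceil n/2\rceil$; for $(a,b)=(0,2)$ the unique graph with the right parameters is $K_3\sqcup K_3$, which is disconnected; analogous arguments handle $b\ge 3$) rules out all $(a,b)$ except $(1,0)$, $(0,1)$, $(1,1)$. The only connected building blocks are thus $K_1$, $K_3$, and $P_4$, and every admissible $G$ is one of
\[
2\,K_3\sqcup r\,K_1,\qquad K_3\sqcup P_4\sqcup(r-1)\,K_1,\qquad 2\,P_4\sqcup(r-2)\,K_1,
\]
with the last two valid only for $r\ge 1$ and $r\ge 2$, respectively.

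\textbf{Rigidity step and the main obstacle.} For each candidate $(G,r)$ one must rule out the existence of a G-quadratic $A$ with $\In_<(J)=I_E(G)$ admitting a regular sequence of linear forms whose quotient is isomorphic to $R$. The plan is to exploit the tensor-product decomposition of $E'/I_E(G)$ forced by the components of $G$, lift it to a structural decomposition of $A$ up to a linear change of coordinates, then kill the proposed regular sequence and compare with $R$. Tracking the singular variety via Theorem~\ref{thmAAH} and Lemmas~\ref{lemNewVar},~\ref{lemKillNewVar}, the resulting factorization would impose nontrivial polynomial conditions on $f_1,\dots,f_6$ contradicting their genericity. The main obstacle is precisely the \emph{non-obstructed} nature of $R$ highlighted in the introduction: quadratic monomial ideals with the same Hilbert function as $R$ do exist (for example $I_E(2\,K_3)$ when $r=0$), so the obstruction cannot be read off the Hilbert series and must be extracted from the full algebra structure of $R$ against the rigidity of the finitely many monomial deformations permitted by the classification.
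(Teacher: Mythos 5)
Your Koszulness argument, Hilbert series computation, and classification of the candidate graphs all match the paper (the classification is the paper's Lemma~\ref{HSgraphs}, and your component-counting argument is a reasonable, if slightly compressed, substitute for its case analysis). The problem is the final step, which is where essentially all of the difficulty lives: your ``rigidity step'' is a plan rather than a proof, and the plan as stated is unlikely to succeed. You propose to lift the tensor decomposition of $E'/I_E(G)$ to a structural decomposition of $A$ ``up to a linear change of coordinates'' and then track singular varieties. But the passage from $J$ to $\In_<(J)$ is a flat degeneration that destroys such decompositions, and the isomorphism $A/(\underline{\ell})\cong R$ need not respect any coordinates you have chosen; you never identify a concrete invariant of $R$ that would be violated. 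Saying that genericity ``would impose nontrivial polynomial conditions'' is exactly the statement that needs proof.

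The paper's missing ingredient is the \emph{rank of quadrics}, used as follows. First, Proposition~\ref{pfaffian} (via the codimension of Pfaffian ideals of a generic alternating matrix) shows that every nonzero quadric in the ideal of $6$ generic quadrics in $6$ variables has rank at least $4$; this is where genericity enters, concretely. Second, the long technical Lemma~\ref{rank2inI} shows that \emph{any} ideal $J$ with $\In_<(J)=I_E(G)$ for one of the three candidate graphs must contain a rank $2$ quadric -- this is a delicate Gr\"obner basis argument (normal forms for quadrics adapted to the monomial order via Lemma~\ref{rank}, S-pair computations, and Corollary~\ref{2quads4vars} on pairs of quadrics in four variables). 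Third, rank can only decrease when one kills a linear form, so the rank $2$ quadric survives into $I$, contradicting the first point. Without some such invariant that is (a) computable from the initial ideal data and (b) stable under quotienting by regular linear forms, the non-obstructedness you correctly flag leaves you with no way to derive a contradiction, so as written the proof is incomplete at its most essential point.
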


The proof of this theorem will require several steps, which we now summarize.  That such an ideal is Koszul follows from Theorem~\ref{FLthm}.  We show that all quadrics in such an ideal have rank at least 4 by computing the height of the ideal of $4 \times 4$ minors of a generic alternating matrix.  The Hilbert series for the ideal of $6$ generic quadrics in 6 exterior algebra variables is $1 + 6t + 9t^2 = (1+3t)^2$.  If this ideal defined an LG-quadratic algebra, then since Hilbert functions are preserved when passing to the initial ideal, there would be a graph $G$ on $n \ge 6$ vertices with $6$ edges whose edge ideal would have Hilbert series $(1+3t)^2(1+t)^{n-6}$.  If there were no such graphs, we would be done.  However, there are three such graphs, ignoring isolated vertices.  Finally we must check that for each graph $G$, if $I$ is an ideal with initial ideal $in_{<}(I) = I_E(G)$, then $I$ must contain a rank $2$ quadric.  This step is long and technical since we cannot make assumptions about the monomial order itself.  Since the rank of a quadric can only go down after killing a linear form, we conclude that the ideal of $6$ generic quadrics in $6$ variables cannot be LG-quadratic.  

The idea to consider the ranks of the quadrics defining a $G$-quadratic algebra is inspired by a result of Eisenbud, Reeves, and Totaro \cite[Theorem 19]{ERT94}; however, their result gives a bound on rank in terms of the codimension on a polynomial ideal and the number of variables.  We need a stronger statement in the exterior algebra setting.  First  we prove a lower bound on the rank of quadrics in ideals generated by generic quadrics in an exterior algebra.

\begin{prop}\label{pfaffian} Fix positive integers $n,r,t$.  Let $I = (f_1,\ldots,f_t) \subseteq E = \bigwedge_\C \langle e_1,\ldots,e_n \rangle$ be an ideal generated by $t$ generic quadrics.  Then $\rank(q) \ge 2r$ for all nonzero quadrics $q \in I$ if and only if $t \le \frac{(n - 2r + 1)(n-2r+2)}{2}$.
\end{prop}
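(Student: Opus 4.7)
The plan is to translate the rank condition into the geometry of Pfaffian varieties and then apply a dimension count for generic linear subspaces of $E_2$. First I would fix the standard identification $E_2 \cong \bigwedge^2 V \cong \mathrm{Sk}_n(\C)$ sending $e_i \wedge e_j$ to the skew-symmetric matrix with a $1$ in position $(i,j)$ and a $-1$ in position $(j,i)$; under this identification, the rank of a quadric $q \in E_2$ equals the matrix rank of its associated skew-symmetric matrix. Let $X \subset E_2$ denote the affine Pfaffian variety of skew-symmetric forms of rank at most $2(r-1)$, cut out by the $2r \times 2r$ sub-Pfaffians of the generic alternating matrix. Then the condition that every nonzero $q \in I$ has $\rank(q) \ge 2r$ is equivalent to $I_2 \cap X = \{0\}$.

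The key geometric input is the classical codimension formula: $X$ is an irreducible closed cone in $E_2$ of codimension $\binom{n-2r+2}{2} = \frac{(n-2r+1)(n-2r+2)}{2}$. For this I would cite a standard reference on Pfaffian ideals, such as Bruns--Vetter, \emph{Determinantal Rings}, or Harris, \emph{Algebraic Geometry: A First Course}. Projectivizing, the condition $I_2 \cap X = \{0\}$ becomes $\mathbb{P}(I_2) \cap \mathbb{P}(X) = \varnothing$ inside $\mathbb{P}(E_2) \cong \mathbb{P}^{\binom{n}{2}-1}$, where $\dim \mathbb{P}(I_2) = t-1$ and $\dim \mathbb{P}(X) = \binom{n}{2} - 1 - \binom{n-2r+2}{2}$.

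Next I would use two complementary facts from projective intersection theory. If $t \ge \binom{n-2r+2}{2} + 1$, then $(t-1) + \dim \mathbb{P}(X) \ge \binom{n}{2} - 1$, so by the standard nonemptiness theorem (closed subvarieties of $\mathbb{P}^N$ whose dimensions sum to at least $N$ must intersect), every $(t-1)$-plane meets $\mathbb{P}(X)$. Hence for any $t$-tuple of quadrics $f_1,\dots,f_t$, the ideal $I$ must contain a nonzero quadric of rank less than $2r$. Conversely, if $t \le \binom{n-2r+2}{2}$, then an incidence-variety dimension count on $\gr(t,E_2) \times \mathbb{P}(X)$ shows that the locus of $t$-dimensional subspaces $W \subseteq E_2$ with $W \cap X \ne \{0\}$ is a proper closed subvariety of $\gr(t,E_2)$. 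Since generic quadrics $f_1,\dots,f_t$ span a generic $t$-dimensional subspace of $E_2$, this gives $I_2 \cap X = \{0\}$ for generic $I$.

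The main obstacle I anticipate is citing or sketching a clean proof of the codimension formula for the Pfaffian locus, since this is the engine driving the whole argument. A self-contained route would realize $X$ as the $(r-1)$-th secant variety of the Grassmannian $\gr(2,n)$ in its Plücker embedding and compute its dimension via classical join-and-secant formulas, but quoting Bruns--Vetter is cleaner. Everything else reduces to routine dimension counting on a Grassmannian and the standard transversality lemma for generic linear subspaces intersecting a fixed subvariety.
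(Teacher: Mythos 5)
Your proposal is correct and follows essentially the same route as the paper: identify quadrics with alternating matrices, use the codimension $\tfrac{(n-2r+1)(n-2r+2)}{2}$ of the rank-$\le 2r-2$ Pfaffian locus, and intersect with the generic $(t-1)$-plane spanned by the quadrics in $\mathbb{P}^{\binom{n}{2}-1}$. The only difference is cosmetic: you spell out both directions (the nonemptiness theorem for $t$ too large and the incidence-variety count for generic avoidance), whereas the paper compresses these into a one-line conclusion after citing Buchsbaum--Eisenbud and J\'ozefiak--Pragacz for the codimension.
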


\begin{proof} First we identify a quadric $q = \sum_{i < j} \alpha_{i,j} e_i e_j$ with the point $p = [\alpha_{1,2}:\cdots:\alpha_{n-1,n}]$ in the projective space $\mathbb{P} = \mathbb{P}_\C^{\binom{n}{2} - 1}$, so that $q = \underline{e} A \underline{e}^\mathsf{T}$, where $A$ is the alternating $n \times n$ matrix with $(i,j)$ entry $\frac{1}{2} \alpha_{i,j}$ if $i > j$ and $- \frac{1}{2} \alpha_{j,i}$ if $j > i$, and $0$ along the diagonal.  Then $\rank(q) < 2r$ if and only if $p \in V(J)$, where $J$ is the ideal of $(2r) \times (2r)$ minors of a generic alternating $n \times n$ matrix $X$.  It follows from \cite[Corollary 2.6]{BE77} that $\sqrt{J} = \mathrm{Pf}_{2r}(X)$, where $\mathrm{Pf}_{2r}(X)$ denotes the ideal of $(2r) \times (2r)$ Pfaffians of $X$.  By \cite[Corollary 2.5]{JP79}, $\codim(J) = \codim(\mathrm{Pf}_{2t}(X)) = \frac{(n-2r+1)(n-2r+2)}{2}$.

A choice of $t$ generic quadrics corresponds to a choice of $t$ generic points from $\mathbb{P}$, whose span is a linear space $L$ of dimension $t-1$.  Thus if $t \le \frac{(n-2r+1)(n-2r+2)}{2}$, then $L \cap V(J) = \varnothing$.  The result follows.
\end{proof}

\begin{rmk} While the radical of the ideal of $(2r)\times(2r)$ minors of a generic alternating matrix is an ideal of Pfaffians, it is not primary to the ideal of Pfaffians.  Rather there are often embedded components.  It would be interesting to compute the full primary decomposition of these ideals.
\end{rmk}

\begin{cor}\label{6quadsrank4} If $I = (f_1,\ldots,f_6) \subseteq E = \bigwedge_\C \langle e_1,\ldots,e_6 \rangle$ an ideal generated by generic quadrics, then every nonzero quadric in $I$ has rank at least $4$.
\end{cor}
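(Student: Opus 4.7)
The plan is to apply Proposition~\ref{pfaffian} directly. We want to conclude that every nonzero quadric in $I$ has rank at least $4$, so we take $r = 2$ in the statement of the proposition. With $n = 6$, $r = 2$, and $t = 6$, the relevant bound becomes
\[
\frac{(n-2r+1)(n-2r+2)}{2} \;=\; \frac{(6-4+1)(6-4+2)}{2} \;=\; \frac{3 \cdot 4}{2} \;=\; 6.
\]
Since $t = 6 \le 6$, Proposition~\ref{pfaffian} applies and yields $\rank(q) \ge 2r = 4$ for every nonzero quadric $q \in I$.

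There is really no obstacle here; the corollary is a numerical specialization of the proposition. The only thing worth double-checking is that the genericity hypothesis in Proposition~\ref{pfaffian} matches the hypothesis of this corollary (both assume $t$ generic quadrics in the same exterior algebra over $\C$), which it does. So the proof is essentially one line invoking the proposition with the computation above.
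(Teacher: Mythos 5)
Your proof is correct and is essentially identical to the paper's, which also just invokes Proposition~\ref{pfaffian} with $n=6$, $r=2$, $t=6$; your explicit verification that $\frac{(6-4+1)(6-4+2)}{2}=6\ge t$ is exactly the computation the paper leaves implicit. Nothing further is needed.
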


\begin{proof} We apply the previous proposition with $n = 6$, $r = 2$ and $t = 6$.
\end{proof}

\begin{cor}\label{2quads4vars} If $q_1, q_2 \in \bigwedge_\C \langle e_1,e_2,e_3,e_4 \rangle$ are linearly independent quadrics, then there is a quadric $q \in (q_1, q_2)$ with $\rank(q) = 2$.
\end{cor}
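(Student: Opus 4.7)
The plan is to use an elementary projective-geometric argument on the $6$-dimensional space $W$ of quadrics in $E' = \bigwedge_\C \langle e_1, e_2, e_3, e_4\rangle$. First I would identify each quadric $q = \sum_{i<j} \alpha_{ij} e_i e_j$ with a $4\times 4$ alternating complex matrix $A$ exactly as in the proof of Proposition~\ref{pfaffian}, so that $\rank(q) = \rank(A) \in \{0, 2, 4\}$ (since the rank of an alternating matrix is even). The key observation is that $\rank(A) = 4$ if and only if the Pfaffian $\mathrm{Pf}(A)$ is nonzero, and that $\mathrm{Pf}$ is itself a nonzero quadratic form on $W$ (a single polynomial $\alpha_{12}\alpha_{34} - \alpha_{13}\alpha_{24} + \alpha_{14}\alpha_{23}$, up to scalar), because $\det(A) = \mathrm{Pf}(A)^2$.

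Next I would pass to projective space $\P(W) \cong \P^5$. The two linearly independent quadrics $q_1, q_2$ span a projective line $L \subset \P(W)$, while the vanishing of the Pfaffian cuts out a quadric hypersurface $H \subset \P(W)$. By the projective dimension theorem over the algebraically closed field $\C$, the intersection $L \cap H$ has dimension at least $\dim L + \dim H - \dim \P(W) = 1 + 4 - 5 = 0$, so it is nonempty. Any point of $L \cap H$ corresponds to a nonzero element $q \in \C q_1 + \C q_2$ with $\mathrm{Pf}(q) = 0$, hence $\rank(q) \le 2$; since the rank is a nonzero even integer, $\rank(q) = 2$.

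I do not expect a real obstacle here — the content reduces to the classical fact that a line and a quadric hypersurface in projective space over an algebraically closed field always meet. The only subtle point is that the intersection point must represent a \emph{nonzero} quadric, ruling out the degenerate possibility $\rank = 0$; this is automatic, since points of projective space are nonzero by definition.
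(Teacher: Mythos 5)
Your argument is correct and is essentially the paper's own proof: the hypersurface $X = V(x_{1,2}x_{3,4}-x_{1,3}x_{2,4}+x_{1,4}x_{2,3})$ in the paper is exactly the vanishing locus of the Pfaffian you describe, and the conclusion follows in both cases from the fact that a line in $\P^5_\C$ must meet a hypersurface. Your write-up is somewhat more explicit about why the intersection point has rank exactly $2$ rather than $0$, but the substance is identical.
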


\begin{proof} We argue as in the proof of Propostion~\ref{pfaffian}.  The subvariety of $\mathbb{P}_\C^5$ corresponding to the space of rank at most $2$ quadrics corresponds to the hypersurface $X = V(x_{1,2}x_{3,4}-x_{1,3}x_{2,4}+x_{1,4}x_{2,3})$.  Two linearly independent quadrics correspond to a line $L$ which must intersect $X$ in at least one point.
\end{proof}

Next we consider edge ideals of graphs that have the right Hilbert series to match an LG-quadratic lift of the ideal of six generic quadrics in six variables.  We will eventually show that none of these can possibly be the initial ideal of an ideal of quadrics of large rank.

\begin{lem}\label{HSgraphs} Let $G$ be a graph on $n$ vertices and let $E = \bigwedge_\C \langle e_1,\ldots,e_n \rangle$ denote the corresponding exterior algebra.
If $\HS_{E/I_E(G)}(t)=(1+3t)^2(1+t)^{n-6}$, then $G$ must be one of the following graphs. The number of isolated vertices in graphs of class (i) is given by $n-5-i$, represented below by discrete nodes to the right.

\vspace{5mm}

(1)

\begin{center}
\begin{tikzpicture}
  [scale=.8,auto=left,node/.style={circle,fill=black!80}]
\node[node] (a) at (0,0) {};
\node[node] (b) at (2,0) {};
\node[node] (c) at (1,2) {};
\node[node] (d) at (3,0) {};
\node[node] (e) at (5,0) {};
\node[node] (f) at (4,2) {};
\node[node] (g) at (7,1) {};
\node[node] (h) at (8,1) {};
\node[node] (i) at (10,1) {};

\draw node[fill,circle,scale=.3] at (9,1) {};
\draw node[fill,circle,scale=.3] at (9.5,1) {};
\draw node[fill,circle,scale=.3] at (8.5,1) {};
\foreach \from/\to in {a/b,a/c,b/c,d/e,d/f,e/f}
    \draw (\from) -- (\to);

\end{tikzpicture}
\end{center}

\vspace{5mm}

(2)

\begin{center}
\begin{tikzpicture}
  [scale=.8,auto=left,node/.style={circle,fill=black!80}]
\node[node] (a) at (0,0) {};
\node[node] (b) at (2,0) {};
\node[node] (c) at (1,2) {};
\node[node] (d) at (3,1) {};
\node[node] (e) at (4,1) {};
\node[node] (f) at (5,1) {};
\node[node] (g) at (6,1) {};
\node[node] (h) at (8,1) {};
\node[node] (i) at (9,1) {};
\node[node] (j) at (11,1) {};

\draw node[fill,circle,scale=.3] at (10,1) {};
\draw node[fill,circle,scale=.3] at (9.5,1) {};
\draw node[fill,circle,scale=.3] at (10.5,1) {};

\foreach \from/\to in {a/b,a/c,b/c,d/e,e/f,f/g}
    \draw (\from) -- (\to);

\end{tikzpicture}
\end{center}

\vspace{5mm}

(3)

\begin{center}
\begin{tikzpicture}
  [scale=.8,auto=left,node/.style={circle,fill=black!80}]
\node[node] (a) at (0,1.5) {};
\node[node] (b) at (1,1.5) {};
\node[node] (c) at (2,1.5) {};
\node[node] (d) at (3,1.5) {};
\node[node] (e) at (0,0.5) {};
\node[node] (f) at (1,0.5) {};
\node[node] (g) at (2,0.5) {};
\node[node] (h) at (3,0.5) {};
\node[node] (i) at (5,1) {};
\node[node] (j) at (6,1) {};
\node[node] (k) at (8,1) {};

\draw node[fill,circle,scale=.3] at (7,1) {};
\draw node[fill,circle,scale=.3] at (7.5,1) {};
\draw node[fill,circle,scale=.3] at (6.5,1) {};

\foreach \from/\to in {a/b,b/c,c/d,e/f,f/g,g/h}
    \draw (\from) -- (\to);

\end{tikzpicture}
\end{center}
In particular, $G$ has no vertex of degree greater than $2$.
\end{lem}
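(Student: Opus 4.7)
The plan is to exploit the fact that $\HS_{E/I_E(G)}(t)$ equals the independence polynomial $I(G,t)=\sum_{k}i_k(G)\,t^k$, where $i_k(G)$ counts the $k$-element independent sets of $G$; this holds because $E/I_E(G)$ has a $\K$-basis consisting of the monomials $e_{i_1}\cdots e_{i_k}$ indexed by independent subsets of $G$.  Since disjoint unions of graphs correspond to tensor products of these quotients, $I(G,t)$ is multiplicative: $I(G,t)=\prod_{C} I(C,t)$ where $C$ runs over the connected components of $G$.  Each $I(C,t)$ is therefore a divisor of $(1+3t)^2(1+t)^{n-6}$ in $\Z[t]$ with constant term $1$, and since $1+t$ and $1+3t$ are irreducible in $\Z[t]$, unique factorization forces
\[ I(C,t) = (1+t)^{a}(1+3t)^{b} \]
for some integers $a,b \ge 0$ depending on $C$.

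From this form I would read off $|V(C)|$ and $|E(C)|$.  The coefficient of $t$ gives $|V(C)| = a+3b$, while the coefficient of $t^2$ works out to $i_2(C) = \binom{a}{2} + 3ab + 9\binom{b}{2}$, so $|E(C)| = \binom{a+3b}{2} - i_2(C) = 3b$.  Connectedness of $C$ then gives $3b \ge a+3b-1$, i.e.\ $a \le 1$, and the fact that $G$ has exactly $6$ edges (read off the $t^2$ coefficient of $(1+3t)^2(1+t)^{n-6}$) forces $3b \le 6$, i.e.\ $b \le 2$.  This reduces the analysis to a short finite list of $(a,b)$.

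The easy cases $(a,b)\in\{(1,0),(0,1),(1,1)\}$ give $C = K_1$, $C = K_3$, and $C$ a tree on $4$ vertices, respectively; of the two trees on $4$ vertices only $P_4$ satisfies $I(P_4,t) = (1+t)(1+3t)$, since $I(K_{1,3},t) = 1+4t+3t^2+t^3$ does not.  The hard part will be the two remaining cases.  For $(a,b)=(0,2)$, $C$ would be a connected graph on $6$ vertices with independence number $2$, so its complement would be a triangle-free graph on $6$ vertices with $9$ edges; by Mantel's theorem this complement must be $K_{3,3}$, forcing $C = K_3 \sqcup K_3$ and contradicting connectedness.  For $(a,b)=(1,2)$, $C$ would be a tree on $7$ vertices with independence number $3$, but every tree is bipartite and the larger side of its bipartition is an independent set of size $\ge \lceil 7/2 \rceil = 4$, a contradiction.

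Thus the only connected components of $G$ are $K_1$, $K_3$, and $P_4$.  Because $K_3$ and $P_4$ each contribute $3$ edges and the total edge count is $6$, $G$ has exactly two nontrivial components, yielding the three families in the statement and, in particular, $\Delta(G)\le 2$.
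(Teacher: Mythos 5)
Your proof is correct, and it takes a genuinely different route from the paper's. Both arguments start from the same two facts --- that $\HS_{E/I_E(G)}(t)$ is the independence polynomial of $G$ and that it is multiplicative over connected components --- but they diverge after that. The paper proceeds by a case analysis on $n\in\{6,7,8,\ge 9\}$ and, in the connected cases, by direct enumeration: for $n=7$ it classifies trees by the length of a longest path and exhibits an explicit independent set of size $4$ in each candidate. You instead invoke unique factorization in $\Z[t]$ to force each component's independence polynomial to be $(1+t)^a(1+3t)^b$ (the sign ambiguity being killed by the constant term $1$), read off $|V(C)|=a+3b$ and $|E(C)|=3b$ from the $t$ and $t^2$ coefficients, and then use $|E(C)|\ge|V(C)|-1$ to get $a\le 1$ and the total edge count to get $b\le 2$; the two nontrivial cases $(0,2)$ and $(1,2)$ are dispatched by Mantel's theorem (the complement is a triangle-free graph on $6$ vertices with $9$ edges, hence $K_{3,3}$, so $C=K_3\sqcup K_3$ is disconnected) and by the bipartiteness of trees (a tree on $7$ vertices has an independent set of size $\ge 4$). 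Your computations check out: the $t^2$ coefficient of $(1+t)^a(1+3t)^b$ is indeed $\binom{a}{2}+3ab+9\binom{b}{2}$, giving $|E(C)|=3b$, and the three admissible multisets of components $\{K_3,K_3\}$, $\{K_3,P_4\}$, $\{P_4,P_4\}$ (plus isolated vertices) match the paper's three families, including the count of isolated vertices. What your approach buys is brevity and robustness --- the unique-factorization step replaces the paper's longest stretch of casework and would adapt to other target Hilbert series --- at the cost of importing Mantel's theorem; the paper's enumeration is longer but entirely self-contained and elementary.
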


For the proof, we will need the following lemma.  To simplify notation, we write $\HS(G)$ for $\HS_E(E/I_E(G))$, where $G$ is a graph on $n$ vertices and $E = \bigwedge_\C \langle e_1,\ldots,e_n \rangle.$

\begin{lem}
If $G=G_1\sqcup G_2$, then $\HS(G)=\HS(G_1)\cdot \HS(G_2)$.
\end{lem}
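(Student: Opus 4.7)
The plan is to exhibit $E/I_E(G)$ as a $\K$-tensor product of the two corresponding quotients for $G_1$ and $G_2$, and then appeal to the multiplicativity of Hilbert series under $\K$-tensor products of graded vector spaces.

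First I would relabel vertices so that $V(G_1) = \{1,\ldots,n_1\}$ and $V(G_2) = \{n_1+1,\ldots,n_1+n_2\}$, and set $E' = \bigwedge_\K\langle e_1,\ldots,e_{n_1}\rangle$ and $E'' = \bigwedge_\K\langle e_{n_1+1},\ldots,e_{n_1+n_2}\rangle$. The standard identification $\bigwedge(V \oplus W) \cong \bigwedge(V) \otimes_\K \bigwedge(W)$ of graded skew-commutative $\K$-algebras then gives $E \cong E' \otimes_\K E''$.

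Next, because $G_1$ and $G_2$ are vertex-disjoint, every edge of $G$ lies entirely within $G_1$ or entirely within $G_2$. Under the tensor product identification the generators of $I_E(G)$ split accordingly, yielding $I_E(G) = (I_{E'}(G_1) \otimes_\K E'') + (E' \otimes_\K I_{E''}(G_2))$. A routine application of the isomorphism theorems for quotients of tensor products then produces a graded $\K$-linear isomorphism
\[E/I_E(G) \;\cong\; \bigl(E'/I_{E'}(G_1)\bigr) \otimes_\K \bigl(E''/I_{E''}(G_2)\bigr).\]

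Finally, since the Hilbert series of a tensor product of nonnegatively graded $\K$-vector spaces equals the product of the individual Hilbert series, taking Hilbert series of both sides yields $\HS(G) = \HS(G_1)\cdot \HS(G_2)$. There is no real obstacle here: the argument is formal, and the only bookkeeping is the verification that the generators of $I_{E'}(G_1)$ and $I_{E''}(G_2)$ involve disjoint variable sets and so extend cleanly to $E$ under the tensor decomposition.
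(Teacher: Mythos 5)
Your argument is correct and is essentially identical to the paper's proof, which also identifies $E/I_E(G)$ with the $\K$-tensor product of $E_1/I_{E_1}(G_1)$ and $E_2/I_{E_2}(G_2)$ and uses multiplicativity of Hilbert series under tensor products. You simply spell out the bookkeeping (disjoint variable sets, splitting of the edge-ideal generators) that the paper leaves implicit.
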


\begin{proof} Let $E_1, E_2$ denote the corresponding exterior algebras for $G_1$ and $G_2$, respectively so that $E = E_1 \tensor_\C E_2$.  The statement follows from the fact that 
$E_1/I_{E_1}(G_1) \tensor_\C E_2/I_{E_2}(G_2) = E/I_E(G)$.
\end{proof}

\begin{proof}[Proof of Lemma~\ref{HSgraphs}.]

Recall that the coefficient on $t^n$ in $\HS(G)$ is the number independent sets of $G$ of size $n$.

Let $n=6$, so $\HS(G)=(1+3t)^2$. If $G$ has more than one connected component, then the Hilbert series of each component must be $1+3t$, which corresponds to a graph on three vertices with three edges, i.e. a triangle. Suppose $G$ is connected. It cannot be a tree, since it has six vertices and six edges, so there must be a cycle. It is easy to eliminate the six graphs with six edges on six vertices with a cycle of length four or greater by calculating their Hilbert series, because they all have independent sets of size $3$. So, there exists a $3$-cycle in $G$. Let's call it $a_1a_2a_3$, and label the remaining vertices $b,c,d$. Without loss of generality, $bc$ is an edge since there must be an edge among $b,c,d$. There must also be an edge among $a_i,b,d$ for all $i$ since $\{a_i,b,d\}$ is not independent, but the only possibility with two remaining edges is $bd$. The same argument for $a_i,c,d$ shows that $cd$ must be an edge. This completes the proof for this case.

Let $n=7$, so $\HS(G)=(1+3t)^2(1+t)$. If $G$ is connected, then it is a tree on seven vertices. We can classify these trees by girth, i.e., the maximal length of a path in $G$. There is only one such tree of girth $m=2$, obtained by connecting one vertex to all other vertices, but its Hilbert series has the wrong coefficient on $t^4$. For girth at least three, consider the following pictures.

\begin{center}
\begin{tikzpicture}
  [scale=.8,auto=left,node/.style={circle,fill=black!100},rednode/.style={regular polygon, regular polygon sides=4,fill=red!100}]
\node[rednode] (a) at (0,1) {};
\node[node] (b) at (2,1) {};
\node[node] (c) at (4,1) {};
\node[node] (d) at (6,1) {};
\node[rednode] (e) at (1,0) {};
\node[rednode] (f) at (3,0) {};
\node[rednode] (g) at (5,0) {};

\foreach \from/\to in {a/b,b/c,c/d}
    \draw (\from) -- (\to);
    
\foreach \from/\to in {e/b,e/c,f/b,f/c,g/b,g/c}
    \draw[dashed] (\from) -- (\to);

\end{tikzpicture}
\end{center}

If $m=3$, then there is an induced path of length three as above and the remaining edges must be chosen among the dashed lines. There cannot be an edge among the three vertices in the bottom row since that would produce a path of length at least four. In any case, the four red square vertices form an independent set, which means this graph has the wrong Hilbert series.

\begin{center}
\begin{tikzpicture}
  [scale=.8,auto=left,node/.style={circle,fill=black!100},rednode/.style={regular polygon, regular polygon sides=4,fill=red!100}]
\node[rednode] (a) at (0,1) {};
\node[node] (b) at (1,1) {};
\node[rednode] (c) at (2,1) {};
\node[node] (d) at (3,1) {};
\node[rednode] (e) at (4,1) {};
\node[rednode] (f) at (1,0) {};
\node[node] (g) at (3,0) {};

\foreach \from/\to in {a/b,b/c,c/d,d/e,c/g,f/g}
    \draw (\from) -- (\to);

\end{tikzpicture}
\end{center}

If $m=4$ and the two remaining vertices are connected, then the above graph is the only possibility, and its Hilbert series has the wrong coefficient on $t^4$ as shown by the red square vertices.

\begin{center}
\begin{tikzpicture}
  [scale=.8,auto=left,node/.style={circle,fill=black!100},rednode/.style={regular polygon, regular polygon sides=4,fill=red!100}]
\node[rednode] (a) at (0,1) {};
\node[node] (b) at (1,1) {};
\node[node] (c) at (2,1) {};
\node[node] (d) at (3,1) {};
\node[rednode] (e) at (4,1) {};
\node[rednode] (f) at (1,0) {};
\node[rednode] (g) at (3,0) {};

\foreach \from/\to in {a/b,b/c,c/d,d/e}
    \draw (\from) -- (\to);
    
\foreach \from/\to in {f/b,f/c,f/d,g/b,g/c,g/d}
    \draw[dashed] (\from) -- (\to);

\end{tikzpicture}
\end{center}

If $m=4$ and the two remaining vertices are not connected, then the rest of the edges must be chosen from the dashed lines above, and in any case the four red square vertices will have no edges between them.

\begin{center}
\begin{tikzpicture}
  [scale=.8,auto=left,node/.style={circle,fill=black!100},rednode/.style={regular polygon, regular polygon sides=4,fill=red!100},yellownode/.style={regular polygon, regular polygon sides=5,fill=blue!100}]
\node[rednode] (a) at (0,1){};
\node[node] (b) at (2,1) {};
\node[yellownode] (c) at (4,1) {};
\node[yellownode] (d) at (6,1) {};
\node[node] (e) at (8,1) {};
\node[rednode] (f) at (10,1) {};
\node[rednode] (g) at (5,0) {};

\foreach \from/\to in {a/b,b/c,c/d,d/e,e/f}
    \draw (\from) -- (\to);
    
\foreach \from/\to in {g/b,g/c,g/d,g/e}
    \draw[dashed] (\from) -- (\to);

\end{tikzpicture}
\end{center}

If $m=5$, the remaining edge must be one of the dashed lines, but then the three red square vertices and one of the green pentagon vertices will form an independent set of size $4$.

If $m=6$, then $G$ is a simple path on seven vertices $a-b-c-d-e-f-g$, and $a,c,e,g$ is an independent set.

Thus, $G$ is not connected, i.e. $G=G_1\sqcup G_2$. It is worth noting that if any graph has Hilbert series $(1+t)^k$, then that graph is $k$ isolated vertices (with no edges). This allows us to perform induction if we find a connected component with such a Hilbert series. The only way to factor $(1+3t)^2(1+t)$ into two polynomials such that neither is $1+t$ is $\HS(G_1)=1+3t$ and $\HS(G_2)=(1+3t)(1+t)$. Then $G_1$ is a triangle as shown above, and $G_2$ is a graph on four vertices with three edges. If we assume $G$ has no isolated vertices (which would reduce to the case $n=6$), then $G_2$ is connected, hence a tree. There are only two trees on four vertices, and the only one with the right Hilbert series is the simple path $a-b-c-d$.

Let $n=8$. The Hilbert series indicates that $G$ has eight vertices and six edges, so it must have at least two connected components $G=G_1\sqcup G_2$. The only way to factor $\HS(G)=(1+3t)^2(1+t)^2$ into two polynomials so that neither is a power of $1+t$ (hence isolated vertices and reduction to a case on fewer vertices) is $\HS(G_i)=(1+3t)(1+t)$ for $i=1,2$. But then, as shown above, $G_i$ either has isolated vertices or is the simple path on four vertices.

Let $n\geq 9$. Then $G$ has at least nine vertices and exactly six edges, which implies at least three connected components. There is no way to factor $(1+3t)^2(1+t)^{n-6}$ into three polynomials such that none is a power of $1+t$, so this reduces to one of the earlier cases.
\end{proof}

As above, to a quadratic form $q \in E = \bigwedge_\C \langle e_1,\ldots,e_n \rangle$, we associate an $n \times n$ alternating matrix $A$ so that
\[q = \underline{e} A \underline{e}^\mathsf{T},\]
where $\underline{e} = (e_1,\ldots,e_n)$.  The rank of $q$ is then $\rank(q) = \rank(A)$.  It is well-known that the rank of an alternating matrix must be even \cite[Corollary 1, p. 351]{Jacobson85i}, say $\rank(q) = 2r$, and that after a change of variables, we can write 
\[q = \sum_{i = 1}^r e_{2r-1}e_{2r}.\]
As we are interested in Gr\"obner bases, which are sensitive to changes of coordinates, we need the following lemma.  Note that when $\ell \in E_1$, we write $e_i > \ell$ if $e_i > e_j$ for all variables $e_j \in \supp(\ell)$.

\begin{lem}\label{rank} Let $q \in E = \bigwedge_\C \langle e_1,\ldots,e_n \rangle$ be a quadric of rank $2r$ and fix a monomial order $<$ on $E$.  Then
\[q = \alpha_1(e_{i_1} + \ell_{1,1})(e_{j_1} + \ell_{1,2}) + \cdots + \alpha_r(e_{i_r} + \ell_{r,1})(e_{j_r} + \ell_{r,2}),\]
where $\alpha_1,\ldots,\alpha_r \in \C^\ast$, $e_{i_1},\ldots,e_{i_r},e_{j_1},\ldots,e_{j_r}$ are distinct variables, $\ell_{s,t} \in E_1$  with support disjoint from $\{e_{i_1},\ldots,e_{i_s},e_{j_1},\ldots,e_{j_t}\}$  and with $e_{i_s} > \ell_{s,1}$ and $e_{j_s} > \ell_{s,2}$ for all $1 \le s \le r$, and
\[e_{i_1}e_{j_1} > e_{i_2}e_{j_2} > \cdots > e_{i_r}e_{j_r}.\]
\end{lem}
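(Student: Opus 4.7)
The plan is to induct on the half-rank $r$ of $q$. The base case $r = 0$ gives $q = 0$ and is trivial. For the inductive step, let $e_{i_1} e_{j_1}$ (with $i_1 < j_1$) be the leading monomial $LT(q)$ and collect terms by their dependence on $e_{i_1}$ and $e_{j_1}$ to write
\[
q = \alpha_1 e_{i_1} e_{j_1} + e_{i_1} \lambda_1 + \lambda_2 e_{j_1} + q',
\]
where $\alpha_1 \in \C^*$, the linear forms $\lambda_1, \lambda_2 \in E_1$ and the quadric $q'$ have support disjoint from $\{e_{i_1}, e_{j_1}\}$. Setting $\mu_1 = \lambda_2/\alpha_1$ and $\mu_2 = \lambda_1/\alpha_1$, direct expansion yields
\[
q = \alpha_1(e_{i_1} + \mu_1)(e_{j_1} + \mu_2) + q'',
\]
where $q'' = q' - \alpha_1 \mu_1 \mu_2$ is again supported on $\{e_k : k \ne i_1, j_1\}$.

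I would then verify three properties. (a) $e_{i_1} > \mu_1$ and $e_{j_1} > \mu_2$: since $LT(q) = e_{i_1} e_{j_1}$, every $e_k \in \supp(\lambda_2)$ satisfies $e_k e_{j_1} < e_{i_1} e_{j_1}$, and the multiplicativity of $<$ forces $e_k < e_{i_1}$; the inequality for $\mu_2$ is symmetric. (b) The change of basis $e_{i_1} \mapsto e_{i_1} + \mu_1$, $e_{j_1} \mapsto e_{j_1} + \mu_2$ (fixing the other generators) transforms $q$ into $\alpha_1 e_{i_1} e_{j_1} + q''$, whose alternating matrix is block-diagonal with a rank-$2$ block paired with the matrix of $q''$; hence $\rank(q'') = 2(r-1)$. (c) $LT(q'') < e_{i_1} e_{j_1}$: monomials inherited from $q'$ are already smaller than $LT(q)$, while any $e_a e_b \in \supp(\lambda_1 \lambda_2)$ satisfies $e_a < e_{j_1}$ and $e_b < e_{i_1}$, so two multiplicative comparisons give $e_a e_b < e_a e_{i_1} < e_{j_1} e_{i_1} = e_{i_1} e_{j_1}$ as underlying monomials.

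Applying the inductive hypothesis to $q''$ inside the exterior subalgebra on $\{e_k : k \ne i_1, j_1\}$ produces the remaining summands $\alpha_s(e_{i_s} + \ell_{s,1})(e_{j_s} + \ell_{s,2})$ for $s \geq 2$. Because their corner variables automatically avoid $\{e_{i_1}, e_{j_1}\}$, the full list of $2r$ corners is distinct; the chain $e_{i_1} e_{j_1} > e_{i_2} e_{j_2} > \cdots > e_{i_r} e_{j_r}$ follows from (c); and the support and order conditions $e_{i_s} > \ell_{s,1}$, $e_{j_s} > \ell_{s,2}$ propagate directly. The main obstacle is the bookkeeping in step (c) together with the sign subtleties of skew-commutativity: because $<$ is only assumed multiplicative, the comparison between a generic monomial $e_a e_b$ from $\lambda_1 \lambda_2$ and $e_{i_1} e_{j_1}$ must be obtained by factor-by-factor estimates rather than by inspection, and the identity $\lambda_2 e_{j_1} = -e_{j_1} \lambda_2$ in $E$ must be tracked carefully when matching coefficients in the expansion of $(e_{i_1}+\mu_1)(e_{j_1}+\mu_2)$.
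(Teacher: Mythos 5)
Your proposal is correct and follows essentially the same route as the paper: peel off the leading monomial $e_{i_1}e_{j_1}$, complete it to a product $\alpha_1(e_{i_1}+\mu_1)(e_{j_1}+\mu_2)$ by grouping the terms divisible by $e_{i_1}$ or $e_{j_1}$, note that the remainder $q''$ avoids these two variables so its rank is $2r-2$ and its leading term is smaller, and induct. Your write-up is in fact more detailed than the paper's on the points (a)--(c), all of which check out.
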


\begin{proof} First note that any such $q$ as above has rank $2r$.  We proceed by induction on $r$.  Let $e_{i_1}e_{j_1}$ be the initial monomial of a rank $2r$ quadric $q$ so that $q = \alpha_1 e_{i_1}e_{j_1} + q'$, with $\alpha_1 \in \C^\ast$ and all terms in $q'$ smaller than $e_{i_1}e_{j_1}$.  By grouping terms divisible by either $e_{i_1}$ or $e_{j_1}$, we can rewrite this as 
\[q = \alpha_1(e_{i_1} + \ell_{1,1})(e_{j_1} + \ell_{2,1})  + q'',\]
where all variables in the supports of $\ell_{1,1}$ and $\ell_{1,2}$ are smaller than $e_{i_1}$ and $e_{j_1}$, respectively.  In particular, the variables $e_{i_1}$ and $e_{j_1}$ are not in the support of $q''$ and so $\rank(q'') = \rank(q) - 2$.  Applying the inductive hypothesis to $q''$, the result follows.
\end{proof}

The most technical part of the proof is showing that any ideal whose initial ideal matched the edge ideals from  Lemma~\ref{HSgraphs} must contain a rank $2$ quadric, which is precisely the following lemma.

There is one subtle point that deserves mentioning; when writing the rank $4$ quadric $q = e_1e_2 + e_1e_3 + e_2e_4$ in the form of the previous lemma, we have $q = (e_1-e_4)(e_2+e_3) - e_3e_4$, but $e_3e_4$ is not in the support of $q$.  Care must be taken with the arguments when this occurs.  However, we may sometimes avoid this problem.  Given a quadric $q$ as in the previous Lemma, since $e_{i_1} > \ell_{1,1}$, we can make the change of variables $e_{i_1} \mapsto e_{i_1} - \ell_{1,1}$ without changing the initial ideal.  Continuing in this way, we may always assume that one quadric in a Gr\"obner basis has the following simpler form
\[ q = \alpha_1e_{i_1}e_{j_1} + \cdots + \alpha_r e_{i_r}e_{j_r}.\]
After another change of variables we may assume that $\alpha_i = 1$ for all $i$.

\begin{lem}\label{rank2inI} Let $G$ be one of the three graphs from Lemma~\ref{HSgraphs}.  If $I \subseteq$ is a graded ideal and $<$ is a monomial order such that $in_{<}(I) = I_E(G)$, then there is a rank $2$ quadric in $I$.
\end{lem}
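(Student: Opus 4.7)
The plan is to analyze each of the three graphs $G$ from Lemma~\ref{HSgraphs} separately.  Let $q_1, \ldots, q_6$ be the reduced Gr\"obner basis of $I$ with $LT(q_i) = M_i$ equal to the $i$th edge monomial of $G$, and write $q_i = M_i + T_i$, where the tail $T_i$ is supported on non-edge monomials strictly smaller than $M_i$.  Using Lemma~\ref{rank} together with the coordinate simplification from the remark following it, I would first normalize each $q_i$ without changing $in_<(I)$.

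The key algebraic lever is that small exterior algebras force low rank automatically: every nonzero element of $\bigwedge^2 \K^3$ has rank exactly $2$, because an alternating $3 \times 3$ matrix has even rank at most $2$; and by Corollary~\ref{2quads4vars} every two-dimensional subspace of quadrics in four variables contains a rank-$2$ element.  The first thing to try in each case is the Gr\"obner basis element with smallest leading monomial, whose tail is the most constrained: in many vertex labelings, this smallest element is automatically supported in a three- or four-variable subalgebra of $E$ and is therefore of rank $\le 2$.  When this direct approach fails, one must look at linear combinations $q = \sum \alpha_{ij} q_{ij}$ and split them according to the components of $G$.

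For Case (1), $G = K_3 \sqcup K_3$ on vertex sets $A$ and $B$, I would split $q$ into a piece in $\bigwedge^2 \langle e_a : a \in A \rangle$, a piece in $\bigwedge^2 \langle e_b : b \in B \rangle$, and a cross-piece in $\langle e_a \rangle \otimes \langle e_b \rangle$ (together with contributions from isolated vertices).  Since the $A$-piece and $B$-piece automatically factor as products of linear forms in their respective three-variable subalgebras, the condition that $q$ has rank $2$ reduces to matching the cross-piece $\sum \alpha_{ij} T_{ij}$ with an expression of the form $\mu_A^{(1)} \otimes \mu_B^{(2)} - \mu_A^{(2)} \otimes \mu_B^{(1)}$ (up to sign), where the $\mu$'s are the factors of the $A$- and $B$-pieces.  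A dimension count in the parameters $\alpha_{ij}$ together with the factorization data, set against the equations in the nine-dimensional cross space, produces the required rank-$2$ combination.  For Case (3), $G = P_4 \sqcup P_4$, I would aim to construct two linearly independent quadrics in $I$ supported on the four vertices of a single path, at which point Corollary~\ref{2quads4vars} supplies a rank-$2$ element; the obstructing tail terms are cancelled using the Gr\"obner basis elements from the opposite path and from isolated vertices.  Case (2), $G = K_3 \sqcup P_4$, is handled by whichever of the two preceding arguments applies to the given labeling, with a modest amount of extra bookkeeping to track the interaction between the triangle and the path.

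The principal obstacle, and the reason this proof is long and technical, is that the monomial order and the vertex labeling of $G$ are not under our control.  The tails $T_i$ can involve any non-edge monomials below $M_i$, including cross-component monomials and monomials involving isolated vertices.  A full proof must therefore carry out a careful case split on the possible tail supports for each graph and each class of monomial orders, and solve the resulting bilinear rank-$2$ factorization systems in each case, using the Gr\"obner basis reducedness condition to control the size of the systems.
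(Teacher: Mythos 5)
There is a genuine gap here, on two fronts. First, you plan to treat the three graphs of Lemma~\ref{HSgraphs} as three parallel case analyses, but you are missing the observation that makes the paper's proof tractable: assuming (as one may) that every element of the reduced Gr\"obner basis has rank at least $4$, write such an element as $q = wx + yz + \cdots$ via Lemma~\ref{rank}; multiplying by $w$ and by $x$ shows $wyz, xyz \in in_<(I)$, and since $yz$ is a standard monomial (it lies in $\supp(q)$ and the basis is reduced), one of $wy, wz$ and one of $xy, xz$ must be edges. Hence every non-isolated vertex of $G$ has degree at least $2$, which rules out the path components of Cases~(2) and~(3) entirely. Only the two-triangles graph survives, so the separate analyses you sketch for Cases~(2) and~(3) are unnecessary --- and, as sketched, they are also the hardest and least developed part of your plan.

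Second, for the remaining case your central mechanism --- splitting a combination $\sum \alpha_{ij} q_{ij}$ into $A$-, $B$-, and cross-pieces and running a ``dimension count'' against the rank-$2$ locus in the nine-dimensional cross space --- is not a proof and there is no reason to expect it to close: the tails $T_i$ are essentially arbitrary standard-monomial combinations (possibly involving isolated vertices), and the rank-$2$ condition is a determinantal, not linear, condition, so counting parameters does not produce a solution. What actually works is to keep exploiting the Gr\"obner basis property: a Hilbert series computation excludes rank $\ge 6$ elements; an ordering argument (comparing $yz > xd$ against $vw > uf$ for the two triangles) forces some basis element to have the clean form $xy + za$ with $a$ a new variable; and then reducing $xq - q'a$ against $in_<(I)$ forces a second quadric $q'$ into the four-variable subalgebra $\bigwedge_\K\langle x,y,z,a\rangle$, at which point Corollary~\ref{2quads4vars} (which you correctly identify as the right finishing tool) applies. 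Your instinct to land in a four-variable subalgebra is the right one, but you have no concrete route to get two independent quadrics of $I$ into one; the route goes through these order-theoretic and reduction arguments, not through a parameter count.
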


\begin{proof} We may assume that $I$ has a reduced Gr\"obner basis of quadrics of rank at least $4$, or else we are done.  \\

Claim 1: $in_<(I) = I_E(G)$, where $G$ is the graph from Case~(1) from Lemma~\ref{HSgraphs}.\\

Let $q$ denote one of the elements of the reduced Gr\"obner basis with leading term $wx$.  By Lemma~\ref{rank} (and remarks afterward), we can write $q = wx + yz + \cdots$, 
where $wx > yz$ are distinct variables, $\ell_1,\ldots,\ell_4 \in E_1$ and $w > \ell_1, x > \ell_2,\ldots,z > \ell_4$.  Multiplying $q$ by $w$ or $x$, we see that $wyz, xyz \in in_<(I)$.  As $yz \in \supp(q)$, we have $yz \notin in_<(I)$.  Therefore either $wy \in in_<(I)$ or $wz \in in_<(I)$.  Similarly $xy \in in_<(I)$ or $xz \in in_<(I)$.  We conclude that each vertex of $G$ with degree at least one actually has degree at least $2$.  The only possibility for $G$ is Case~(1) from Lemma~\ref{HSgraphs}.

At this point we may assume that $in_<(I) = (uv, uw, vw, xy, xz, yz)$.\\

Claim 2: The Gr\"obner basis of $I$ contains no quadric $q$ with $\rank(q) \ge 6$.\\

Suppose $I$ has a rank $\ge 6$ quadric $q$ in its reduced Gr\"obner basis.  After a linear change of variables that does not change the initial ideal, we may assume that $q$ has the form $q = uv + wa + bc + \cdots$, where $u,v,w,a,b,c$ are distinct variables.  Taking multiples of $q$ we see that $uabc, vabc \in in_<(I)$, which forces $ab \in in_<(I)$ or $ac \in in_<(I)$.  Without loss, we may assume $q$ has the form $q = uv + wx + yc + \cdots$, where $c \notin \{u,v,w,x,y,z\}$.  (If $c \in \{u,v,w,x,y,z\}$, the only choice would be $c = z$, in which case $yz$ would be both the initial monomial of a term in the reduced Gr\"obner basis of $I$ and in the support of $q$, a contradiction.) Replacing the other quadrics by their initial terms does not change the Hilbert function and so $I' = (uw, vw, xy, xz, yz, q)$ has the same Hilbert function as $I$.  Taking the initial ideal of $I'$, we see that $I'' := (uv,uw,vw,xy,xz,yz,uxyc,vxyc) \subseteq in_{<}(I')$.   A calculation shows that the Hilbert series of $E'/I'' = 1 + 7t + 15t^2 + 8t^3$, where $E' = \bigwedge_\K \langle c, u, v, w, x, y, z \rangle$, while  $(1 + 3t)^2(1+t) = 1 + 7t + 15t^2 + 9t^3$.  Therefore, the Hilbert Series of $E/in_<(I)$ is too small, and we conclude that the reduced Gr\"obner basis for $I$ consists of rank $4$ quadrics.\\

Claim 3: The reduced Gr\"obner basis of $I$ contains a quadric of the form $xy + za$, where $in_<(I) = (uv, uw, vw, xy,xz, yz)$ and $a \notin \{u,v,w,x,y,z\}$.\\

Suppose this is not the case.  Then each of the six quadrics in the Gr\"obner basis of $I$ has the form $(a + \ell)(b + \ell') + (c + \ell'')(d + \ell''')$, where $\{a,b,c\} = \{u,v,w\}$ and $d \in \{x,y,z\}$ or $\{a,b,c\} = \{x,y,z\}$ and $d \in \{u,v,w\}$.  Without loss, we may assume that $u > v > w$ and $x > y > z$.  One of the quadrics then has the form $(y + \ell)(z + \ell') + (x + \ell'')(d + \ell''')$, with $\ell,\ell',\ell'',\ell''' \in E_1$, $d \in \{u,v,w\}$ and $yz > xd$.  As $x > y > z$, we must have $z > d \ge w$.  Another quadric has the form $(v + m)(w + m') + (u + m'')(f + m''')$, with $m, m', m'', m''' \in E_1$, $f \in \{x,y,z\}$ and $vw > uf$.  This forces $w > f \ge z$, which contradicts that $z > w$.  This finishes the proof of Claim 3.\\

Claim 4: $I$ contains a rank $2$ quadric.\\

By Claim 3, $I$ contains a quadric of the form $q = xy + za$, where $a \notin \{u,v,w,x,y,z\}$.  A second quadric has the form $q' = (x + k)(z + k') + \alpha(y + k'')(d + k''')$, where $k,k',k'',k''' \in E_1$, $\alpha \in \K^\ast$, and $d$ is a variable distinct from $x,y,z$.  Considering $xq - q'a$, the possibly nonzero leading terms are $xk'a, kza, yda, yk'''a, k''da$.  By our assumptions no monomial from $in_<(I)$ can divide any of these and thus they are all zero.  This forces $c, k, k', k'', k''' \in (a)$.  It follows that $q, q'$ are linearly independent quadrics in $\bigwedge_\K\langle x,y,z,a \rangle$.  By Corollary~\ref{2quads4vars}, $I$ contains a rank $2$ quadric.  
\end{proof} 

\begin{eg} The hypotheses of the previous lemma cannot be weakened considerably.  Consider the ideal 
\[I = (e_1e_2+e_3e_4,e_1e_3+e_2e_4,e_2e_3+e_1e_4,e_5e_6+e_7e_8,e_5e_7+e_6e_8,e_6e_7+e_5e_8),\]
in $E = \bigwedge_\K\langle e_1,\ldots,e_8\rangle$.
It is clear that all $6$ generators of $I$ have rank $4$ and that they form a reverse lexicographic Gr\"obner basis of $I$.  The initial ideal $in_{revlex}(I) = I_E(G)$, where $G$ is the graph from Case~(1) in Lemma~\ref{HSgraphs}.  Note that there is a rank $2$ quadric in $I$:
\[(e_1e_2+e_3e_4)+(e_1e_3+e_2e_4) = (e_1+e_4)(e_2-e_3) \in I.\]
This is in some sense a limiting case later in the proof.
\end{eg}

Now that we have all of the ingredients, we can complete the proof of Theorem~\ref{KoszulNonLG}.

\begin{proof}[Proof of Theorem~\ref{KoszulNonLG}.]  That $E/I$ is Koszul follows from Theorem~\ref{FLthm}.
It follows from  \cite[Theorem 10.5]{FL02} that the Hilbert series of $E/I$ is $HS_{E/I}(t) = 1 + 6t + 9t^2$.  Suppose $I$ is LG-quadratic.  Then there is an ideal $J \subseteq E' = \bigwedge_\C\langle y_1,\ldots,y_n \rangle$ equipped with a monomial order $<$ such that $J$ has a quadratic Gr\"obner basis and there is a regular sequence $\underline{\ell} = \ell_1,\ldots,\ell_d$ on $E'/J$ such that $E'/(J + (\underline{\ell})) \cong E/I$.  It follows that $HS_{E'/J}(t) = (1 + 6t + 9t^2)(1+t)^{n-6}$. Then $in_<(J) = I(G)$ for one of the three graphs from Lemma~\ref{HSgraphs}.  By Lemma~\ref{rank2inI}, $J$ contains a rank $2$ quadric.  Since rank can only go down when we kill a regular element, $I$ contains a rank $2$ quadric.  This contradicts Proposition~\ref{6quadsrank4} and the result follows.
\end{proof}

\begin{rmk} In light of this example, it is reasonable to ask if one can construct similar examples of commutative Koszul algebras that are not LG-quadratic.  Indeed, Fr\"oberg and L\"ofwall proved \cite[Theorem 7.1]{FL02} that if $A = \K[x_1,\ldots,x_n]/I$, where $I$ is generated by $t$ generic quadrics, then $A$ is Koszul if and only if $t \le n$ or $t \ge n^2/4 + n/2$.  The first case corresponds to generic complete intersections which are known to be LG-quadratic \cite[Remark 1.19]{Conca}.  The second case is similar to the setting considered here.  One could compute a lower bound on the ranks of the quadrics in $I$ by computing the codimension of the appropriate ideal of minors of a generic symmetric matrix.  However, it is not clear what one can say about the ranks of quadrics appearing in a quadratic Gr\"obner basis can be in a polynomial ring.  The results in \cite{ERT94} depend on the number of variables, which makes arguments relating to the LG-quadratic property difficult.
\end{rmk}

\section{Examples}\label{examples}

In this section we collect examples to compare quadratic ideals over exterior algebras.  In particular, we have the following implications for quadratic quotients of an exterior algebra $E/I$:
\[\text{quadratic GB} \Rightarrow \text{G-quadratic} \Rightarrow \text{LG-quadratic} \Rightarrow \text{Koszul} \Rightarrow \text{quadratic}.\]
The first two implications follow from the definitions.  The third implication is a consequence of Theorem~\ref{LG}.  The last implication is well-known \cite{Froberg99}.

Theorems~\ref{LGnotG}  and \ref{KoszulNonLG} show that the second and third implications are strict, respectively.
The examples below show that the remaining implications are strict as well.  \\

\begin{eg} \textbf{A G-quadratic ideal that has no quadratic Gr\"obner basis in the given coordinate system}\label{ThieuExample}\\
This example is due to Thieu \cite[Example 5.2.2.ii]{Thieu13}, who shows that it is Koszul via a filtration argument and asks whether it is G-quadratic.  We include a shorter proof of Koszulness here by showing that it is G-quadratic.  Fix a field $\K$ with $\mathrm{char}(\K) \neq 2$.  Let $E = \bigwedge_\K \langle e_1, e_2, e_3, e_4\rangle$ be a 4-variable exterior algebra and consider $I = (e_1e_2 - e_3e_4, e_1e_3 - e_2e_4)$.  First we show that $I$ has no quadratic Gr\"obner basis with respect to any monomial order.  Indeed, fix a monomial order $\prec$ and observe that the initial terms of the generators of $I$ with respect to $\prec$ are  $ (e_ie_j, e_ie_k)$ for some distinct integers $1 \le i,j,k \le 4$.  It is clear that $I_3 = (e_1,e_2,e_3,e_4)^3$, while not every degree $3$ monomial in $E$ is in $(e_ie_j, e_ie_k)$.  Therefore $I$ has no quadratic Gr\"obner basis.  However, since the $\mathrm{char}(\K) \neq 2$, we have 
\begin{linenomath}\begin{align*}
    I &= ((e_1e_2-e_3e_4)+(e_1e_3-e_2e_4), (e_1e_2-e_3e_4)-(e_1e_3-e_2e_4))\\
    &= ((e_1+e_4)(e_2+e_3),(e_1-e_4)(e_2-e_3)).
    \end{align*}\end{linenomath}
    Thus after a linear change of variables, $I$ becomes a quadratic monomial ideal, which is obviously G-quadratic.
\end{eg}

That there are quadratic quotients of an exterior algebra that are not Koszul is well known.  This is true even when the defining ideal is principal.

\begin{eg} \textbf{A quadratic principal ideal that is not Koszul} This example is due to Nguyen \cite{Nguyen14} and is derived from \cite{McCullough19}.  We include a proof here for completeness. Let $E = \bigwedge_\C \langle e_1, e_2, e_3, e_4\rangle$ and let $I = (e_1e_2 + e_3e_4)$.  One checks that the Hilbert series of $E/I$ is $\HS_{E/I}(t) = 1 + 4t + 5t^2$.  If $E/I$ were Koszul, then by \cite[Definition-Theorem 1]{Froberg99} its Poincare series would be
\[P_{E/I}(t) = \frac{1}{\HS_{E/I}(-t)} = 1 + 4t + 11t^2 + 24t^3 + 41t^4 + 44t^5 - 29t^6 \cdots,\]
which has a negative coefficient.  Therefore $E/I$ cannot be Koszul.
\end{eg}

\section*{Acknowledgements}

The authors thank Aldo Conca and Hal Schenck for useful conversations.  The first author was supported by NSF
grant DMS-1900792.

\bibliographystyle{siam}
\bibliography{exterior_koszul}

\end{document}